\newcommand{\tr}{\mbox{tr}}
\newcommand{\ol}{\overline}
\newcommand{\ul}{\underline}
\newtheorem{theorem}{Theorem}[section]
\newtheorem{lemma}[theorem]{Lemma}
\newtheorem{proposition}[theorem]{Proposition}
 \theoremstyle{definition}
\newtheorem{definition}[theorem]{Definition}
\theoremstyle{remark}
\numberwithin{equation}{section}
\begin{document}

\title[Dirichlet for Monge-Amp\`ere type equations ]
{The Dirichlet problem for Monge-Amp\`ere type equations on Hermitian manifolds}

\author{Weisong Dong}
\address{School of Mathematics, Tianjin University, 135 Yaguan Rd.,
         Tianjin, 300354, P.R. China}
\email{dr.dong@tju.edu.cn}


\begin{abstract}

In this paper, we solve the Dirichlet problem for Monge-Amp\`ere type equations for $(n-1)$-plurisubharmonic functions on Hermitian manifolds.


\emph{Keywords:} Monge-Amp\`ere equations; Second order estimates; Hermitian manifolds.

\end{abstract}

\maketitle

\section{Introduction}

Let $(M, \omega)$ be a compact complex manifold of complex dimension $n \geq 2$ with Hermitian metric $\omega$.
Let $\chi(z)$ be a smooth real (1,1) form on $M$
and $\psi(z) \in C^{\infty}( M )$ be a positive function.
Given any smooth function $u \in C^{\infty}(M)$,
the operator $\mathcal{M}_p^n [u]$ is defined as below
\[
\mathcal{M}_p^n ( u )
= \Pi_{1\leq i_1 < \cdots < i_p\leq n} (\lambda_{i_1} + \cdots + \lambda_{i_p}),
\]
where $\lambda = (\lambda_1, \cdots, \lambda_n )$ are the eigenvalues of $g := \chi + \sqrt{-1} \partial \bar\partial u$ with respect to
$\omega$.
We also simply write $\mathcal{M} (u)$ when there is no ambiguity.
In this paper, we consider the following Monge-Amp\`ere type equation
\begin{equation}\label{eqn}
 \mathcal{M}_p^n ( u ) = \psi \;\mbox{in} \; M,
\end{equation}
with Dirichlet boundary data
\begin{equation}\label{eqn-b}
  u = \varphi \; \mbox{on} \; \partial M,
\end{equation}
where $\varphi \in C^{\infty} (\partial M, \mathbb{R})$ is a given function.

\begin{definition}
\label{def}
Let $p\in \{1, \cdots, n\}$. The cone $\mathcal{P}_p$ is a subset in $\mathbb{R}^n$ with element
$(\lambda_1, \cdots, \lambda_n)$ such that $\lambda_{i_1} + \cdots + \lambda_{i_p} > 0$
for all $1\leq i_1 < i_2 < \cdots < i_p\leq n$.
The cone of $n \times n$ Hermitian matrices $P_p$ is defined as:
$A \in P_p$ if the $n$-tuple of it's eigenvalues is in $\mathcal{P}_p$.
We call $A$ is $p$-positive if $A\in P_p$.
\end{definition}

For any smooth real $(1,1)$-forms $h$ on $(M, \omega)$,
written in local coordinates as $h = \sqrt{-1} h _{i\bar j} d z^i \wedge d z^{\bar j}$,
we say that
\[
h \in P_p (M)
\]
if the vector of eigenvalues of the Hermitian endomorphism ${h^i}_j = \omega^{i\bar k} h_{j \bar k}$ lies in the
$\mathcal{P}_p$ cone at each point of $M$.
A function $u\in C^2(M)$ is called admissible if $\chi + \sqrt{-1} \partial \bar \partial u \in P_p(M)$.
In this paper, we prove the following result.

\begin{theorem}\label{thm1}
Let $(M,\omega)$ be a compact Hermitian manifold with boundary.
Let $\chi \in P_{n-1}(M)$ be a smooth $(1,1)$ form.
Assume that there exists an admissible subsolution $\ul u \in C^\infty (M)$, that is
\[
\mathcal{M}_{n-1}^n (\ul u) \geq \psi \; \mbox{in}\;  M \;\; \mbox{and} \;\; \ul u = \varphi \; \mbox{on}\; \partial M.
\]
Then there exists a unique admissible solution $u \in C^\infty (M)$ to \eqref{eqn} and \eqref{eqn-b} for $p=n-1$.
\end{theorem}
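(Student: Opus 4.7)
The plan is to apply the continuity method combined with a priori estimates. I would embed the equation in the one-parameter family
\begin{equation*}
\mathcal{M}_{n-1}^n(u^t) = t\psi + (1-t)\mathcal{M}_{n-1}^n(\underline{u}), \qquad u^t = \varphi \text{ on } \partial M,
\end{equation*}
for $t \in [0,1]$, with $u^0 = \underline{u}$. The set $T \subset [0,1]$ of parameters admitting an admissible solution $u^t \in C^{2,\alpha}(\overline{M})$ is non-empty. Openness follows from the implicit function theorem, since the linearized operator, whose principal part is $F^{i\bar j} \partial_i \partial_{\bar j}$, is uniformly elliptic on admissible functions (its coefficients are positive sums over $(n-1)$-subsets of eigenvalues) and invertible on functions vanishing on $\partial M$ by the linear Dirichlet theory. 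Closedness reduces to uniform estimates $\|u^t\|_{C^{2,\alpha}(\overline{M})} \leq C$ independent of $t$, after which Schauder bootstrapping supplies $C^\infty$ regularity.

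The $C^0$ estimate uses $\underline{u}$ as a lower barrier together with the almost-subharmonicity of admissible functions (implied by $(n-1)$-positivity) to obtain an upper bound. For the gradient estimate, $\underline{u}$ provides the lower boundary barrier while $-\underline{u} + K\rho$, with $\rho$ a defining function of $\partial M$, serves as the upper barrier; the interior gradient bound is a maximum principle applied to $e^{\phi(u)} |\nabla u|^2$. The $C^2$ estimate decomposes into boundary and interior pieces. On $\partial M$ the tangential-tangential second derivatives are read off the boundary data $\varphi$; the tangential-normal derivatives are controlled by barrier constructions of Guan--Sz\'ekelyhidi type adapted to the operator $\mathcal{M}_{n-1}^n$; and the pure normal derivative $u_{\nu\bar\nu}$ is then extracted from the equation itself using uniform ellipticity and the bounds on the remaining components of the complex Hessian.

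The main obstacle is the global interior $C^2$ estimate on a non-K\"ahler manifold. Testing the quantity
\begin{equation*}
H := \log \lambda_{\max}(g) + \phi(|\nabla u|^2) + A(\underline{u} - u)
\end{equation*}
at its maximum produces two problematic groups of terms: torsion commutators coming from $d\omega \neq 0$, and third-order cross terms coming from differentiating the largest eigenvalue $\lambda_{\max}$ twice. The former are absorbed using the gradient term and choosing $A$ large; the latter are handled by the concavity of $F^{1/n} = (\mathcal{M}_{n-1}^n)^{1/n}$ on $\mathcal{P}_{n-1}$ together with a Guan-type inequality
\begin{equation*}
F^{i\bar j}(\underline{g}_{i\bar j} - g_{i\bar j}) \geq \delta \Bigl(1 + \sum_i F^{i\bar i}\Bigr),
\end{equation*}
valid uniformly in a neighborhood of the maximum point, which is the crucial contribution of the subsolution hypothesis. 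Once $\|u\|_{C^2}$ is controlled, admissibility is preserved strictly inside $\mathcal{P}_{n-1}$, the equation is uniformly elliptic and $(\mathcal{M}_{n-1}^n)^{1/n}$ is concave, so Evans--Krylov delivers $C^{2,\alpha}$ and Schauder estimates close the argument. Uniqueness follows from the maximum principle applied to the linearization.
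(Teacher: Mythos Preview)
Your overall architecture (continuity method, reduction to a priori estimates, Evans--Krylov) matches the paper, but two of your steps are genuine gaps.

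\medskip
\textbf{The interior gradient estimate.} You assert that the interior gradient bound follows from a maximum principle applied to $e^{\phi(u)}|\nabla u|^2$. For complex Hessian--type equations on Hermitian manifolds this is an open problem; the paper says so explicitly. The route actually taken is indirect: one first proves the second--order estimate in the weaker form
\[
\sup_M |\sqrt{-1}\,\partial\bar\partial u| \leq C\bigl(1+\sup_M |\nabla u|^2\bigr),
\]
with the constant independent of $u$, and then runs a blow--up argument that reduces a putative gradient blow--up to a nonconstant maximal $(n-1)$--plurisubharmonic function on $\mathbb{C}^n$, contradicting the Liouville theorem of Tosatti--Weinkove. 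Your test function $H=\log\lambda_{\max}+\phi(|\nabla u|^2)+A(\ul u-u)$ is essentially the Hou--Ma--Wu / Sz\'ekelyhidi quantity, but it only yields the $K$--dependent $C^2$ bound above, not a gradient estimate; the Liouville step is not optional.

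\medskip
\textbf{The boundary double normal estimate.} You write that $u_{\nu\bar\nu}$ is ``extracted from the equation itself using uniform ellipticity and the bounds on the remaining components.'' This hides the hardest part of the argument. Writing the equation at a boundary point in the form
\[
\prod_{i=1}^n\bigl(\tr_\omega g - g_{\bar i i}\bigr)=\psi+\text{(mixed terms)},
\]
one sees that controlling $g_{\bar n n}$ from above requires a \emph{uniform positive lower bound} on $\tr_\omega g - g_{\bar n n}=\lambda_1'+\cdots+\lambda_{n-1}'$, the trace of $g$ restricted to $T^{1,0}\partial M$. This lower bound is not automatic from ellipticity; it is the content of the key Lemma~\ref{thm-lowerbound}, proved via a Caffarelli--Nirenberg--Spruck barrier $\Psi=\ul u-\frac{\eta_0}{t_0}\rho+(l_iz^i+l_{\bar i}\bar z^i)\rho+LD^2+\varepsilon(|z|^2-C_0^{-1}x^n)$ together with a deformation argument in the eigenvalue cone. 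Without this barrier construction the double normal bound does not close, and the specific structure of $\mathcal{P}_{n-1}$ (its boundary hyperplane has normal $(1,\ldots,1)$) is used in an essential way.
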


The equation \eqref{eqn} with $p=n-1$ is of special interest, which is closely related to the Gaudochon conjecture,
the form-type Calabi-Yau equation and the
$n-1$ plurisubharmonic functions studied by Harvey-Lawson \cite{HL12,HL13}.
Since Yau in \cite{Yau} proved the existence of a solution to the complex Monge-Amp\`ere equation
on K\"ahler manifolds, that corresponds to the end point $p=1$ in \eqref{eqn},
there have been many extensions and generalizations of Yau's theorem.
Tian-Yau was concerned with the existence of complete Ricci-flat K\"ahler metrics on quasiprojective varieties in \cite{TY1,TY2}.
Gaudochon conjecture \cite{Gauduchon} is a natural generalization of the Calabi-Yau theorem from K\"akler to Hermitian manifolds
and was solved by Sz\'ekelyhidi-Tosatti-Weinkove \cite{S-T-W} recently. See also Guan-Nie \cite{GN}.
The form-type Calaby-Yau equation was first introduced by Fu-Wang-Wu \cite{FWW1} and they solved the
equation with the assumption that the metric has non-negative orthogonal bisectional curvature in \cite{FWW2}.
This assumption was removed in the work of Tosatti-Weinkove \cite{TW17}.
Fino-Li-Salamon-Vezzoni \cite{FLSV} studied the Calabi-Yau equation on certain symplectic non-K\"ahler 4-manifolds,
which is equivalent to solve the Monge-Amp\`ere equation on 2-torus. Recently, Y. Li and the author \cite{DLi} solved the Monge-Amp\`ere
equation on flat $n$-torus involving gradient terms in the determinant.
For the study of Calabi-Yau metric on non-compact singular varieties, we refer the reader to Collins-Guo-Tong \cite{CGT}
and references therein.
The complex Monge-Amp\`ere equation was solved on Hermitian manifolds by Tosatti-Weinkove \cite{T-W2},
and on almost Hermitian manifolds by Chu-Tosatti-Weinkove \cite{C-T-W}.

The Dirichlet problem for complex Monge-Amp\`ere equation in a domain $\Omega \subset \mathbb{C}^n$ was solved by
Caffarelli-Kohn-Nirenberg-Spruck \cite{CKNS} when $\Omega$ is a strongly pseudoconvex domain.
B. Guan explored the problem on general domains with the existence of a subsolution in \cite{Guan98}.
The technique in \cite{Guan98} of deriving second order estimates was used by P. F. Guan in \cite{GuanP}
to resolve the conjecture of Chern-Levine-Nirenberg.
Li in \cite{S.Y.Li} treated the problem for more Hessian equations.
On complex manifolds with boundary, the Dirichlet problem for complex Monge-Amp\`ere equation was widely concerned.
Cheng-Yau \cite{ChY} proved the existence of a solution $u$ to the problem on K\"ahler manifolds with $u = + \infty$ on the boundary.
Cherrier-Hanani \cite{C-H} investigated the problem on strongly pseudoconvex Hermitian manifolds.
Assuming that there exists a subsolution, Guan-Li \cite{G-L} considered the problem on general Hermitian manifolds.
We refer the reader to the survey of Phong-Song-Sturm \cite{P-S-S} for the vast field of complex Monge-Amp\`ere equation.
For a type of Hessian quotient equations, see Guan-Sun \cite{GSun} and Song-Weinkove \cite{SW}.
Weak solutions to complex Hessian equations have been discussed by many authors.
We refer the reader to \cite{BT,BT-acta,Blocki,D-K-weak,Ko,KNguyen1,KNguyen2,LuCH}.
For the real Hessian equation, see \cite{CNS3,CLN,Li,G,GS,JL} and references therein.

The other end point case $p=n$ is also well known since it is a linear PDE.
The two end point case $p=1$ and $p=n$ respectively correspond to the end point case $k=n$ and $k=1$ of the following
counterpart to equation \eqref{eqn}, i.e. the complex $k$-Hessian equations,
\[
\sigma_k (\lambda) = \sum_{1\leq i_1 < \cdots < i_k \leq n} \lambda_{i_1} \cdots \lambda_{i_k} = \psi,\;\mbox{where}\; 1\leq k \leq n.
\]
The above equation arose from the study of Hull-Strominger system by Fu-Yau \cite{F-Y2}.
The complex $k$-Hessian equations for $1<k <n$ on closed K\"ahler manifolds was solved by Dinew-Ko\l odziej \cite{D-K}
in 2012 by first establishing a Liouville theorem, and then combining it with the second order estimate derived by Hou-Ma-Wu \cite{H-M-W}.
Sun in \cite{Sun1} deduced the existence of a solution to the Hessian quotient equation.
The complex $k$-Hessian equations on Hermitian manifolds was solved independently by Zhang \cite{D.K.Zhang} and Sz\'ekelyhidi \cite{Gabor}
by virtue of the Liouville theorem.
The Liouville theorem was employed in a blow-up argument to conclude the missing part of the a priori estimate, the gradient estimate.
Zhang in \cite{X.W.Zhang} proved the gradient estimate with an extra assumption that $\chi + \sqrt{-1} \partial \bar\partial u > 0$.
It is still open to derive the gradient estimate by the maximum principle.
The Dirichlet problem for complex $k$-Hessian equations was solved by Collins-Picard \cite{CP} on Hermitian manifolds recently.

Now we discuss the proof of our main theorem. 
The approach to tackle the equation is the well-known and standard continuity method, 
which reduced the solvability to the a priori estimates.
By the envelope trick in \cite{TWWY}, and the Evans-Krylov theorem,
one can derive the $C^{2,\alpha}$ estimate as long as one have $C^0$, $C^1$ and $C^2$ estimate.
The $C^0$ estimate and the boundary $C^1$ estimate are easy to obtain since we assumed a subsolution.
Therefore, our main goal is to derive the following second order estimate
\[
\sup_M |\sqrt{-1}\partial \bar \partial u | \leq C (1 + \sup_{M} |\nabla u|^2 ),
\]
where $C$ only depends on the background data, but not on $u$. 
This was achieved by following closely the argument in Collins-Picard \cite{CP}.
Then, by virtue of the Liouville theorem for maximal $(n-1)$-plurisubharmonic functions
established by Tosatti-Weinkove \cite{TW17} we can apply a similar blow up argument as in Collins-Picard \cite{CP}
to get the desired gradient estimate, and thereby obtain the second order estimate.

The rest of the paper is organized as follows.
In Section 2, we introduce some useful notations and recall some preliminary estimates.
In Section 3, we prove the boundary mixed normal-tangential estimates.
Finally, we establish the boundary double normal estimate in Section 4.

\textbf{Acknowledgements}:
I would like to thank Professor YanYan Li for helpful comments and for his constant encouragement. 
The author is supported by the National Natural Science Foundation of China, No.11801405.

\section{Preliminaries and main results}

For an admissible solution $u \in C^{\infty}(M)$, in local coordinates,
denote ${g^i}_j = \omega^{i\bar k} (\chi_{ \overline{k} j} + u_{\overline{k} j} )$.
Let $\lambda = (\lambda_1, \cdots, \lambda_n )$ be the eigenvalues of the endomorphism ${g^i}_j$.
Then equation \eqref{eqn} can be rewritten as
\begin{equation}\label{Sk2}
F ( {g^i}_j ) := F (\lambda) := \Pi_{1\leq i_1 < \cdots < i_p\leq n} (\lambda_{i_1} + \cdots + \lambda_{i_p})
= \psi,
\end{equation}
Denote $\tilde F = F^{1/C_n^p}$, and for convenience, we use the notation $f (\lambda) = \tilde F (\lambda)$.
We remark that $f$ satisfies
the following structure conditions:
\begin{align}
 f > 0 \; \mbox{in}\;  \mathcal{P}_p \; \mbox{and}\; f = 0 \; \mbox{on}\; \partial \mathcal{P}_p; \tag{f1}
\end{align}
\begin{align}
 f_i := \frac{\partial f}{\partial \lambda_i} > 0, \; \forall \; 1\leq i\leq n; \tag{f2}
\end{align}
\begin{align}
 f \; \mbox{is concave in} \;\mathcal{P}_p; \tag{f3}
\end{align}
\begin{align}
 f \; \mbox{is homogeneous of degree one, i.e. }\;  f(t \lambda) = t f(\lambda), \; \forall\;  t > 0.\tag{f4}
\end{align}
We refer the reader to \cite{Dinew} for the properties of the operator, or to the appendix in \cite{Dong}.
By (f3) and (f4), we can derive that
\begin{align}
\sum f_i (\lambda) = f (\lambda) + \sum f_i (\lambda) (1 - \lambda_i) \geq f (1,\ldots, 1) > 0 \; \mathrm{ in } \; \mathcal{P}_p. \tag{f5}
\end{align}
Note that $\tilde F (A)$ is concave for $A \in P_p$.
As in Collins-Picard \cite{CP}, we define the tensor
\[
F^{p\bar q} = \frac{\partial F}{\partial {g^r}_p} \omega^{r \bar q}\; \mbox{and}\;
F^{p\bar q, r\bar s} = \frac{\partial^2 F}{\partial {g^a}_p \partial {g^b}_r} \omega^{a\bar q} \omega^{b\bar s}.
\]
At a point $p_0\in M$, where $\omega=\sqrt{-1} \delta_{k \ell} dz^{k} \wedge d \overline{z}^{\ell}$
and ${g^i}_j = \lambda_j {\delta^i}_j$ is diagonal,
we have
\[
\tilde F^{p\bar q} = f_p \delta_{pq}\; \mbox{and} \; \tilde F^{k\bar k} = \frac{\partial f}{\partial \lambda_k} =
\sum_{k\in\{i_1, \cdots, i_p\} } \frac{\frac{1}{C_n^p} f(\lambda) }{\lambda_{i_1} + \cdots + \lambda_{i_p} }.
\]
Now we introduce the following notations that
\[
K = \sup_M |\nabla u|^2 + 1 \;\mbox{and}\; \mathcal{F} = \sum_i f_i.
\]

We recall some known estimates in the literature.
We first recall the maximum principle.
\begin{lemma}[\cite{CNS3}]
\label{MP}
Let $(M, \omega)$ be a compact Hermitian manifold with boundary. Suppose that $v : M \rightarrow \mathbb{R}$ is a smooth function such that
the vector of eigenvalues of $\omega^{-1} (\chi + \sqrt{-1} \partial \bar\partial v )$ lies outside the set
$\mathcal{P}_p^\psi :=\{\lambda \in \mathcal{P}_p: f(\lambda) \geq \tilde \psi\}$ for all $z\in M$. If $u \leq v$ on $\partial M$,
then
\[
u\leq v \; \mbox{on} \; M.
\]
\end{lemma}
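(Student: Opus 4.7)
The plan is to argue by contradiction using the comparison principle. Suppose for the sake of contradiction that $u - v$ attains an interior maximum at some point $z_0 \in M \setminus \partial M$; if no such interior maximum exists beyond the value on $\partial M$, we are done since $u \leq v$ on $\partial M$ by assumption.

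At such an interior maximum, the standard second-order necessary condition gives $\sqrt{-1}\,\partial\bar\partial(u-v)(z_0) \leq 0$ as a Hermitian $(1,1)$-form, and therefore
\[
\chi(z_0) + \sqrt{-1}\,\partial\bar\partial u(z_0) \leq \chi(z_0) + \sqrt{-1}\,\partial\bar\partial v(z_0)
\]
in the sense of Hermitian matrices after contracting with $\omega^{-1}$. Since $u$ is admissible, the left-hand side has eigenvalues in $\mathcal{P}_p$; as $\mathcal{P}_p$ is invariant under adding a nonnegative Hermitian matrix (by Weyl's monotonicity of eigenvalues, each partial sum $\lambda_{i_1}+\cdots+\lambda_{i_p}$ only increases), the right-hand side also has eigenvalues in $\mathcal{P}_p$.

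Now I invoke the structure conditions (f2) and (f4): because each $f_i>0$ on $\mathcal{P}_p$, the function $\tilde F$ is monotone nondecreasing with respect to the Hermitian matrix ordering on $P_p$. Applying this to the inequality above gives
\[
\tilde\psi(z_0) \;=\; \tilde F\bigl(\omega^{-1}(\chi+\sqrt{-1}\,\partial\bar\partial u)(z_0)\bigr) \;\leq\; \tilde F\bigl(\omega^{-1}(\chi+\sqrt{-1}\,\partial\bar\partial v)(z_0)\bigr),
\]
which places the eigenvalue vector of $\omega^{-1}(\chi+\sqrt{-1}\,\partial\bar\partial v)(z_0)$ inside $\mathcal{P}_p^\psi$. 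This contradicts the hypothesis that these eigenvalues lie outside $\mathcal{P}_p^\psi$ for every $z \in M$. Hence $u - v$ attains its maximum on $\partial M$, and since $u \leq v$ there, we conclude $u \leq v$ on all of $M$.

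The only subtle point is justifying the matrix-monotonicity step: one needs that $A \leq B$ with $A \in P_p$ implies both $B \in P_p$ and $\tilde F(A) \leq \tilde F(B)$. Both follow from the min-max characterization of eigenvalues combined with (f2), and this is the only nontrivial ingredient; everything else is a standard maximum-principle argument. I do not foresee a substantive obstacle since the lemma is essentially the comparison principle for the concave, elliptic operator $\tilde F$.
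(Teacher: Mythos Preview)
Your proof is correct and follows essentially the same route as the paper's: both argue by contradiction at an interior maximum of $u-v$, use the second-derivative test together with Weyl's inequality to get $\lambda_i \leq \mu_i$, and then invoke the monotonicity of $f$ (the paper phrases this as $\mathcal{P}_p + \overline{\mathcal{P}_1} \subset \mathcal{P}_p$) to force the eigenvalues of $\omega^{-1}(\chi+\sqrt{-1}\,\partial\bar\partial v)$ into $\mathcal{P}_p^\psi$, contradicting the hypothesis. Your write-up is in fact more explicit than the paper's about the monotonicity step.
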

\begin{proof}
Suppose $u > v$ at some interior point $x$ in $M$. Without loss of generality, we assume that $u-v$ attains a maximum at $x$.
Then $D^2 u (x) \leq D^2 v(x)$.
Let $\lambda(z) = (\lambda_1, \cdots, \lambda_n)$ be the eigenvalues of $\omega^{-1} (\chi + \sqrt{-1}\partial \ol \partial u) (z)$
and $\mu(z) = (\mu_1, \cdots, \mu_n)$ be the eigenvalues of $\omega^{-1} (\chi + \sqrt{-1}\partial \ol \partial v) (z)$,
both arranged in decreasing order.
By Weyl inequality, we have $\lambda_i \leq \mu_i$ for $1\leq i \leq n$.
Since $\mathcal{P}_p + \ol{\mathcal{P}_1} \subset \mathcal{P}_p$ and $\lambda(z) \in  \mathcal{P}_p^\psi$,
we can derive a contradiction.
\end{proof}

Next, we prove that
\begin{lemma}
\label{C0-C1b}
Suppose $u \in C^3 (M)$ is an admissible solution to the equation \eqref{eqn} and \eqref{eqn-b}.
We have the following estimates
\[
\sup_M |u| \leq C \; \mbox{and} \; \sup_{\partial M} |\nabla u | \leq C,
\]
where $C$ depends on $(M, \omega)$, $\chi$ and $\ul u$.
\end{lemma}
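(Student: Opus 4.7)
The plan is to dispatch the $C^0$ bound first and then build the boundary gradient estimate on top of it, using $\ul u$ as a one-sided barrier in each case. For the lower bound $u \geq \ul u$, I will invoke concavity of the operator $\tilde F$ on $P_{n-1}$ (property (f3)):
\[
\tilde F(\chi + \sqrt{-1}\partial\bar\partial \ul u) \leq \tilde F(\chi + \sqrt{-1}\partial\bar\partial u) + \tilde F^{i\bar j}(u)\bigl(\ul u - u\bigr)_{i\bar j}.
\]
Since the left-hand side is $\geq \psi^{1/C_n^p}$ by the subsolution hypothesis while the middle term equals $\psi^{1/C_n^p}$, rearrangement yields $L(u - \ul u) \leq 0$, where $L := \tilde F^{i\bar j}(u)\partial_i\partial_{\bar j}$ is linear and uniformly elliptic by (f2) and the admissibility of $u$. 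The minimum principle combined with $u - \ul u = 0$ on $\partial M$ then forces $u \geq \ul u$ on $M$. For the upper bound, for $\lambda \in \mathcal{P}_{n-1}$ set $T := \sum_i \lambda_i$; equation \eqref{eqn} reads $\prod_{j=1}^n (T - \lambda_j) = \psi$, and the AM-GM inequality applied to the positive numbers $T - \lambda_j$ gives $T \geq \tfrac{n}{n-1}\psi^{1/n}$. Hence $\Delta_\omega u$ is bounded below uniformly, and comparison with an auxiliary function $\rho$ satisfying $\Delta_\omega \rho = 1$ and $\rho|_{\partial M} = 0$ yields $\sup_M u \leq \sup_{\partial M}\varphi + C$.

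Turning to the boundary gradient, tangential derivatives of $u$ on $\partial M$ coincide with those of $\varphi$ and are automatically controlled. For the inward normal derivative $\partial_\nu u$, the lower bound $\partial_\nu u \geq \partial_\nu \ul u$ is immediate from $u \geq \ul u$ with equality on $\partial M$. The upper bound requires constructing an upper barrier in a tubular neighborhood $U_\delta := \{d < \delta\}$ of $\partial M$; I propose
\[
\bar u := \varphi^* + A\,d - B\,d^2,
\]
where $\varphi^*$ is any smooth extension of $\varphi$ to $M$ and $d$ is the distance to $\partial M$ (smooth on $U_\delta$ for $\delta$ small). A direct computation gives
\[
\sqrt{-1}\partial\bar\partial \bar u = \sqrt{-1}\partial\bar\partial \varphi^* + (A - 2Bd)\sqrt{-1}\partial\bar\partial d - 2B\,\sqrt{-1}\partial d \wedge \bar\partial d,
\]
and for $B$ large enough the negative rank-one term in the normal direction drives one eigenvalue of $\omega^{-1}(\chi + \sqrt{-1}\partial\bar\partial \bar u)$ down, forcing the eigenvalues outside $\mathcal{P}_{n-1}^\psi$ throughout $U_\delta$. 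Choosing $A$ sufficiently large (depending on $\sup_M|u|$, $B$, and $\delta$) then secures $\bar u \geq u$ on the inner boundary $\{d = \delta\}$, so Lemma~\ref{MP} applied on $U_\delta$ produces $u \leq \bar u$ on $U_\delta$; since $u = \bar u$ on $\partial M$, we deduce $\partial_\nu u \leq \partial_\nu \bar u = A + \partial_\nu \varphi^* \leq C$ on $\partial M$.

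The main obstacle is the upper-barrier construction: $B$ must be chosen large enough that the explicit rank-one contribution $-2B\sqrt{-1}\partial d\wedge\bar\partial d$ overwhelms the (unsigned and torsion-laden) terms from $\sqrt{-1}\partial\bar\partial d$ introduced by the non-K\"ahlerness of $\omega$, while simultaneously $A$ must remain large enough to dominate $u$ on $\{d = \delta\}$ using only the already-established $C^0$ estimate, and all of this must be arranged uniformly on a tube of definite thickness $\delta$. Once these parameters are balanced, the remainder of the argument is routine.
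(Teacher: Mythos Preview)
Your argument is correct, but for the upper barrier it is considerably more involved than the paper's. The paper dispenses with the tubular-neighborhood construction entirely: it solves the \emph{global linear} Dirichlet problem
\[
\omega^{j\bar k}\bigl(\chi_{\bar k j}+\partial_j\partial_{\bar k}\bar u\bigr)=0 \ \text{in } M,\qquad \bar u=\varphi\ \text{on }\partial M,
\]
and observes that any $\lambda\in\mathcal{P}_{n-1}$ satisfies $\sum_i\lambda_i>0$, so the eigenvalues of $\chi+\sqrt{-1}\partial\bar\partial\bar u$ lie outside $\mathcal{P}_{n-1}$ (hence outside $\mathcal{P}_{n-1}^{\psi}$) at every point. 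Lemma~\ref{MP} then gives $u\le\bar u$ on all of $M$ in one stroke. Since $\ul u=u=\bar u$ on $\partial M$, the sandwich $\ul u\le u\le\bar u$ simultaneously delivers the $C^0$ bound and the two-sided bound on the normal derivative, with no parameters to tune.

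Your route---AM-GM for the $C^0$ upper bound, then a local barrier $\varphi^*+A d-Bd^2$ for $\partial_\nu u$---does go through, but the order of choices you announce (``$B$ large, then $A$ large depending on $B$'') is not quite right as stated: the term $(A-2Bd)\sqrt{-1}\partial\bar\partial d$ feeds $A$ back into the eigenvalue estimate, so $B$ must in fact beat a quantity that itself depends on $A$. The loop closes only if one first fixes $\delta$ small (so that the $O(B\delta)$ contribution from $(A-2Bd)\partial\bar\partial d$ is negligible against the rank-one $-B/2$ term), then takes $B$ large, and finally sets $A=C/\delta+B\delta$. You flag this balancing as ``the main obstacle,'' and it is resolvable, but the paper's linear-PDE trick avoids it altogether. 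Your lower bound $u\ge\ul u$ via concavity and your AM-GM trace bound coincide with what the paper does.
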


\begin{proof}
Define $\tilde \psi = \psi^{1/C_n^p}$.
By the inequality of arithmetic and geometric means, we have
\[
\frac{ p  \omega^{j\bar k} (\chi_{\overline{k} j} + u_{ \overline{k} j} ) }{n} \geq f (\lambda) \geq \inf \tilde \psi > 0.
\]
Let $\ol u$ be the solution to the following equation
\[
\omega^{j\bar k} (\chi_{\bar k j} + \partial_j \partial_{\bar k} \ol u) = 0 \; \mbox{in} \; M
\;\; \mbox{and} \;\; \ol u = \varphi \; \mbox{on} \; \partial M.
\]
Then, by the maximum principle we can derive that $\ul u \leq u \leq \ol u$ on $M$.
Since $\ul u = u = \ol u$ on $\partial M$, we get the desired gradient estimate on $\partial M$.
\end{proof}

We now recall second order estimates for admissible solutions from \cite{Gabor} and \cite{H-M-W}.

\begin{proposition}
Suppose $u \in C^4 (M)$ is an admissible solution to the equation \eqref{eqn} and \eqref{eqn-b}.
Then, we have
\begin{equation}
\label{C2-global}
\sup_M |\sqrt{-1}\partial \bar \partial u | \leq C (K + \sup_{\partial M} |\sqrt{-1} \partial \bar \partial u|),
\end{equation}
where $C$ depends on $(M, \omega)$, $|\tilde \psi|_{C^2}$, $\ul u$ and $\chi$.
\end{proposition}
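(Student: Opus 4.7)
The plan is to apply the maximum principle to a carefully chosen test function built from the largest eigenvalue of the endomorphism ${g^i}_j$, following the Hou--Ma--Wu framework \cite{H-M-W} adapted to Hermitian (non-K\"ahler) background as in Sz\'ekelyhidi \cite{Gabor}. Specifically, set
\[
Q = \log \lambda_1 + \phi(|\nabla u|^2) + A(\ul u - u),
\]
where $\lambda_1 = \lambda_1({g^i}_j)$ is the largest eigenvalue, $\phi(t) = -\tfrac{1}{2}\log(2K - t)$ is the standard Hou--Ma--Wu auxiliary function, and $A \gg 1$ is a constant to be determined from the subsolution $\ul u$. If $Q$ attains its maximum on $\partial M$, then since $\phi$, $|\nabla u|$ and $u$ are all bounded (using Lemma \ref{C0-C1b}), we get $\sup_M \lambda_1 \leq C(K + \sup_{\partial M}|\sqrt{-1}\partial\bar\partial u|)$ and \eqref{C2-global} follows. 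So we may assume the maximum is attained at an interior point $p_0$.

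At $p_0$ I would choose normal coordinates so that $\omega = \sqrt{-1}\delta_{k\ell} dz^k\wedge d\bar z^{\ell}$ and ${g^i}_j = \lambda_j {\delta^i}_j$ is diagonal with $\lambda_1 \geq \cdots \geq \lambda_n$. Since $\lambda_1$ may have multiplicity, I would apply the standard perturbation trick: replace ${g^i}_j$ by ${g^i}_j - {B^i}_j$ with a small diagonal matrix ${B^i}_j$ (vanishing in the first slot) so that the perturbed top eigenvalue is simple near $p_0$ and smooth; this does not affect the first two derivatives at $p_0$. Then compute $\nabla Q = 0$ and $F^{i\bar j}Q_{i\bar j} \leq 0$ using the formulas for first and second derivatives of $\lambda_1$ together with the commutator identities on Hermitian manifolds (which produce torsion terms of order one in $u$, bounded by $C\lambda_1 \mathcal{F}$).

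The main step is controlling the third-order terms that arise from $F^{i\bar j}(\log\lambda_1)_{i\bar j}$. Following the Hou--Ma--Wu argument, one splits indices into a "good" set (where $\lambda_i$ is close to $\lambda_1$, so $f_i/\lambda_1$ gives a favourable coefficient) and a "bad" set; the bad third-order terms are absorbed using concavity (f3) combined with the second-order inequality for $F$ applied to the differentiated equation $F^{i\bar j}u_{i\bar j 1} = \tilde\psi_1 + \text{lower order}$, plus the term $\phi'(|\nabla u|^2)F^{i\bar j}(|\nabla u|^2)_{i\bar j}$ which yields a positive $(2\phi'' + (\phi')^2)$ contribution. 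The choice of $\phi$ guarantees $\phi' \sim 1/K$ and $\phi'' \geq (\phi')^2$, which is exactly what is needed for this absorption.

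The obstruction left is handling the torsion of $\omega$ and the commutation of covariant derivatives, which introduces terms involving $F^{i\bar j}\lambda_i$ and $\mathcal{F}\cdot\lambda_1$. These are precisely what the subsolution term $A(\ul u - u)$ controls: since $\ul u$ is admissible and $F(\ul g) \geq F(g)$ is false in general, one instead invokes the Sz\'ekelyhidi inequality $F^{i\bar j}(\ul g - g)_{i\bar j} \geq \kappa(\mathcal{F} + 1)$ for some $\kappa > 0$ depending on the gap between $\ul u$ and the boundary of $\mathcal{P}_{n-1}^\psi$; this is available because $f$ satisfies (f1)--(f5). Taking $A$ large enough, the $A\kappa\mathcal{F}$ gain dominates all remaining uncontrolled terms, and the inequality $F^{i\bar j}Q_{i\bar j} \leq 0$ forces $\lambda_1 \leq CK$ at $p_0$. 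Combined with the boundary case this yields \eqref{C2-global}.
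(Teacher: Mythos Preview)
Your outline is essentially the same maximum-principle argument the paper invokes: apply the Hou--Ma--Wu/Sz\'ekelyhidi machinery to a test function of the form $\log\lambda_1 + \varphi(|\nabla u|^2) + (\text{auxiliary in }u)$, handle the third-order terms via concavity and the $\varphi'' \geq (\varphi')^2$ mechanism, and close with a subsolution-driven $\kappa\mathcal{F}$ gain. The paper simply defers to \cite{H-M-W,Gabor} after writing down the test function, so your sketch is in fact more detailed than the paper's own proof.

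One minor difference: the paper chooses the third piece to be $\phi(u) = -2Au + \tfrac{A\tau}{2}u^2$ (a function of $u$ alone, as in \cite{Gabor}), whereas you use $A(\ul u - u)$. Both work, but they feed the argument differently: with $\phi(u)$ one gets the $\kappa\mathcal{F}$ gain from Sz\'ekelyhidi's dichotomy (Proposition~5 in \cite{Gabor}) applied to the $\mathcal{C}$-subsolution, while your linear term relies on $\tilde F^{i\bar j}(\ul g - g)_{i\bar j}$ directly. Be careful here: the inequality you state, $F^{i\bar j}(\ul g - g)_{i\bar j} \geq \kappa(\mathcal{F}+1)$, does \emph{not} hold unconditionally---it is one branch of a dichotomy (the other branch being $|\lambda| \leq R$, which already gives the conclusion). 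You should phrase it that way rather than as an outright inequality; otherwise the step as written is not justified.
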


\begin{proof}
It is easy to see that our operator satisfies the assumptions in \cite{Gabor}.
Define the following two functions,
\[
\varphi (s) = -\frac{1}{2}\log(1- \frac{s}{2K}) \; \mbox{and} \; \phi (t) = -2 A t + \frac{A \tau }{2} t^2,
\]
where $\tau$ is a small constant to be chosen depending on $|u|_{C^0}$ and $A$ is a large constant to be chosen.
Apply the maximum principle to the following test function:
\[
G=\log \lambda_1 + \varphi (|\nabla u|^{2} ) + \phi (u),
\]
where $\lambda_1: M \rightarrow \mathbb{R}$ is the largest eigenvalue of the
Hermitian endomorphism ${g^i}_j$.
By the argument in \cite{H-M-W} and \cite{Gabor}, we can
conclude the estimate \eqref{C2-global}.
\end{proof}

The main purpose of this paper is to establish the following second order estimate for admissible solutions on the boundary $\partial M$,
which will be completed in the next two sections.

\begin{proposition}
\label{boundary}
Suppose $u \in C^4 (M)$ is an admissible solution to the equation \eqref{eqn} and \eqref{eqn-b} with $p = n-1$.
Then, we have
\begin{equation}
\label{C2}
\sup_{\partial M} |\sqrt{-1}\partial \bar \partial u | \leq C K,
\end{equation}
where $C$ depends on $(M, \omega)$, $\underline{u}$, $\varphi$, $|\tilde \psi|_{C^2}$, $\inf \psi$ and $\chi$.
\end{proposition}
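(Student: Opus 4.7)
The plan is to bound $|\sqrt{-1}\partial\bar\partial u|(x_0)$ at an arbitrary boundary point $x_0 \in \partial M$, following the strategy of Collins--Picard \cite{CP} adapted to the cone $\mathcal{P}_{n-1}$. Fix holomorphic coordinates $(z^1,\ldots,z^n)$ centered at $x_0$ in which $\omega(x_0)$ is the standard Hermitian form, $\partial M = \{\rho=0\}$ locally for a smooth defining function $\rho$ with $M\subset\{\rho>0\}$, and choose an orthonormal $(1,0)$-frame $e_1,\ldots,e_{n-1},e_n$ at $x_0$ with $e_\alpha$ ($\alpha<n$) tangent to $\partial M$ and $e_n$ normal. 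The second covariant derivatives of $u$ at $x_0$ split into three classes relative to this frame: tangential-tangential $u_{\alpha\bar\beta}$ with $\alpha,\beta<n$, mixed $u_{\alpha\bar n}$ with $\alpha<n$, and the double normal $u_{n\bar n}$. I bound each in turn.

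The tangential-tangential estimate is immediate: differentiating the boundary condition $u = \varphi$ twice along vectors tangent to $\partial M$ yields $|u_{\alpha\bar\beta}(x_0)| \leq C(1 + |\nabla u(x_0)|)$ with $C$ depending only on the boundary geometry and $|\varphi|_{C^2}$, which is a fortiori $\leq CK$.

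For the mixed estimates (Section 3), for each tangential direction $e_\alpha$ I would bound the normal derivative at $x_0$ of the auxiliary function $w = \mathrm{Re}(e_\alpha(u-\varphi))$, which vanishes on $\partial M \cap B_\delta(x_0)$. Commuting derivatives with the equation produces $|\mathcal{L} w| \leq CK\,\cF + C$, where $\mathcal{L} = F^{p\bar q}\partial_p\partial_{\bar q}$ is the linearized operator. The admissible subsolution $\ul u$ together with the concavity property (f3) of $\tilde F$ provides the standard one-sided inequality $\mathcal{L}(\ul u - u) \geq \tau\,\cF - C$ on $\mathcal{P}_{n-1}^\psi$. A barrier of the form $\Phi = A(\ul u - u) + B|z|^2 - N\rho^2$ on a half-ball $B_\delta(x_0)\cap M$, with the constants $A,B,N$ scaled appropriately in $K$, then yields $|u_{\alpha\bar n}(x_0)| \leq CK$ via the maximum principle applied to $\Phi \pm w$.

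The double normal estimate $|u_{n\bar n}(x_0)| \leq CK$ in Section 4 is the principal obstacle, since it cannot be obtained from the boundary data alone. After the tangential-tangential and mixed estimates are in place, every entry of the Hermitian matrix $g(x_0)$ is bounded by $CK$ except possibly $g_{n\bar n}(x_0)$, and one must use the equation $\tilde F(g) = \tilde\psi$ together with admissibility $g(x_0) \in P_{n-1}$ to close. The difficulty, specific to $p = n-1$, is that a large $u_{n\bar n}(x_0)$ is a priori consistent with the equation if one of the admissibility factors $\sigma_1(\lambda) - \lambda_i$ becomes small, so one cannot simply read the bound off of $F(g)=\psi$. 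Following Collins--Picard, the plan is to analyze the eigenvalues of $g(x_0)$ using Cauchy interlacing applied to the controlled $(n-1)\times(n-1)$ tangential block, combined with the admissibility constraint $\lambda_{\max} < \sum_{i\neq\max}\lambda_i$ and the subsolution inequality, to extract a uniform positive lower bound on the partial derivative $\tilde F^{n\bar n}(x_0)$ depending only on $\inf\psi$ and the background data. Combined with the equation, this lower bound then controls $u_{n\bar n}(x_0)$ by $CK$. Making this argument quantitatively rigorous, in particular handling the effect of the mixed off-diagonal entries of $g(x_0)$ on the interlacing bounds with constants depending only on the background data, is the technical heart of Section 4.
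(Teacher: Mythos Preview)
Your outline follows the correct three-step decomposition (tangential-tangential, mixed, double normal), but two of the three steps contain substantive gaps.

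\textbf{Mixed estimate.} Your claimed bound $|\mathcal{L}w| \leq CK\,\cF + C$ is not justified by the simple commutation you describe. When you commute the tangential operator $T_\alpha$ with $\mathcal{L}$, a term of the form $\tilde F^{p\bar q}\,\partial_p(\rho_{t^\alpha}/\rho_{x^n})\,\partial_{\bar q}\partial_{x^n}(u-\ul u)$ appears, and since $\partial_{x^n}=\partial_n+\partial_{\bar n}$ this contains the $(0,2)$-type derivative $u_{\bar q\bar n}$, which is \emph{not} an entry of $g$ and cannot be absorbed into $\cF$ or $\sum f_i|\lambda_i|$. The paper (following Collins--Picard) handles this by including quadratic gradient terms $K^{-1/2}\sum_i(\partial_{y^i}(u-\ul u))^2$ and $K^{-1/2}\sum_{a<n}|\nabla_a(u-\ul u)|^2$ in the barrier; applying $\mathcal{L}$ to these produces positive quadratic terms $K^{-1/2}\tilde F^{p\bar q}\partial_p\partial_{y^i}(u-\ul u)\partial_{\bar q}\partial_{y^i}(u-\ul u)$ that absorb the bad commutator terms via Cauchy--Schwarz. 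Your barrier $A(\ul u-u)+B|z|^2-N\rho^2$ lacks this mechanism. Moreover, the scaling matters: the paper's barrier yields the sharp bound $|g_{\bar n\alpha}|\leq CK^{1/2}$, not $CK$, and this exponent is needed in Section 4.

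\textbf{Double normal estimate.} Your proposal here is too vague to constitute a plan. The paper does not proceed by bounding $\tilde F^{n\bar n}$ below via interlacing. Instead, for $p=n-1$ the equation can be rewritten explicitly as
\[
\prod_{i=1}^n(\tr_\omega g - g_{\bar i i}) = \psi + \sum_{i<n}|g_{\bar n i}|^2\prod_{j\neq i,\,j<n}(\tr_\omega g - g_{\bar j j}),
\]
and the key reduction is to a uniform \emph{lower} bound $\lambda'_1+\cdots+\lambda'_{n-1}\geq c_0>0$ on the trace of the tangential block. With this in hand, the mixed bound $|g_{\bar n i}|^2\leq CK$ and the displayed identity force $\tr_\omega g - g_{\bar 1 1}\leq CK$, hence $g_{\bar n n}\leq CK$. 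The lower bound $c_0$ is obtained by a Caffarelli--Nirenberg--Spruck barrier argument: one introduces a one-parameter family $A_t$ interpolating between the tangential blocks of $u$ and $\ul u$, finds the hitting time $t_0$ for $\partial\mathcal{P}_{n-1}$, and shows $t_0\leq 1-\kappa_0$ by constructing an explicit supersolution $\Psi$ on a half-ball with $u\leq\Psi$ and $\Psi(0)=u(0)$. Your sketch does not identify this mechanism, and the quantity you propose to bound ($\tilde F^{n\bar n}$) is not what the paper controls.
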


We remark that the assumption $p=n-1$ is only used in deriving the double normal estimate.

\section{Boundary Mixed Normal-Tangential Estimates}

Following the setup in \cite{CP}, let $\Omega$ be a boundary chart of $p\in \partial M$.
Choose coordinates $z = (z^1, \cdots, z^n)$ such that $p$ corresponds to the origin and $\alpha_{\bar k j}(0) = \delta_{k j}$,
where $z^i = x^i + \sqrt{-1} y^i$.
We may rotate the coordinates such that $x^n$ is in the direction of the inner normal vector to $\Omega$ at the origin.
Let $\rho$ be the defining function of the boundary $\partial M$, that is
\[
\partial M \cap \Omega = \{\rho = 0\}, \;\; \Omega \subset \{\rho \leq 0\}, \;\; d\rho \neq 0 \; \mbox{on} \; \partial M.
\]
Without loss of generality, we can assume that
\[
\rho = -x^n + O(|z|^2).
\]
Denote
\[
t^\alpha = y^\alpha, \alpha \in \{1, \cdots, n\}, t^{n+\alpha} = x^\alpha, \alpha \in \{1, \cdots, n-1\}.
\]
We can find a function $\zeta(t) $ such that $\rho (t, \zeta (t)) = 0$ on $\partial M$.
By the boundary condition $u = \ul u$ on $\partial M$, we get the estimate
\begin{equation}
\label{pure-tangential-C2}
|\partial_{t^\alpha} \partial_{t^\beta} u (0)| \leq C, \;\mbox{for}\; 1\leq \alpha, \beta \leq 2n-1,
\end{equation}
where $C$ depends on $(M, \omega)$, $\chi$ and $\ul u$ by the gradient estimate in Lemma \ref{C0-C1b},
as
\begin{equation}
\label{boundary-derivatives}
\partial_{t^\alpha} \partial_{t^\beta} (u - \ul u) (0) = - \partial_{x^n} (u - \ul u) (0) \rho_{t^\alpha t^\beta} (0).
\end{equation}

The goal of this section is to prove
\begin{proposition}
\label{TN}
Suppose $u \in C^4 (\ol M)$ is an admissible solution to the equation \eqref{eqn} and \eqref{eqn-b}.
Then, we have
\begin{equation}
\label{tangential-normal-C2}
|g_{\bar n i}| \leq C K^{1/2} \; \mbox{for}\; i = 1, \cdots, n-1,
\end{equation}
where $C$ depends on $(M, \omega)$, $\ul u$, $\varphi$, $|\tilde \psi|_{C^2}$, $\inf_M \psi$ and $\chi$.
\end{proposition}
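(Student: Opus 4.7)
The plan is to follow the boundary barrier method of Caffarelli–Kohn–Nirenberg–Spruck \cite{CKNS} and Guan \cite{G-L}, adapted to the $(n-1)$-plurisubharmonic setting as in Collins–Picard \cite{CP}. Fix a boundary point $p_0 \in \partial M$ with the chart $\Omega$ and coordinates $(z^1,\ldots,z^n)$ from the preamble, so that $p_0$ is the origin and $\rho = -x^n + O(|z|^2)$. Fix a tangential index $\alpha \in \{1,\ldots,2n-1\}$ and introduce
$$T = \partial_{t^\alpha} - \frac{\rho_{t^\alpha}}{\rho_{x^n}} \partial_{x^n},$$
which is tangent to $\partial M$ in $\Omega$; since $u = \ul u = \varphi$ on $\partial M$, one has $T(u - \ul u) \equiv 0$ there. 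By the smoothness of $\rho_{t^\alpha}/\rho_{x^n}$ and the boundary gradient bound of Lemma \ref{C0-C1b}, it suffices to prove $|\partial_{x^n} T(u - \ul u)(0)| \leq C K^{1/2}$.

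Differentiating \eqref{Sk2} once in the $t^\alpha$-direction and commuting through the Hermitian torsion, a standard computation gives
$$\bigl| L\, T(u - \ul u) \bigr| \leq C (1 + K^{1/2})\, \mathcal{F} \quad \mbox{on }\Omega \cap \{\rho \leq 0\},$$
where $L := F^{p\bar q} \partial_p \partial_{\bar q}$ is the linearized operator at $u$. I then construct a barrier of the form
$$\Psi = A\bigl(\ul u - u + s\rho - N_0 \rho^2\bigr) + B K |z|^2 - N K^{1/2}\, \rho,$$
with constants $A = A_0 K^{1/2}$ and positive $B, N, N_0, s, A_0$ to be chosen in that order. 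The subsolution hypothesis $\mathcal{M}_{n-1}^n(\ul u) \geq \psi$, combined with the concavity of $\tilde F$, yields $\tilde F(\ul g) \geq \tilde F(g)$, and a by-now standard manipulation (see \cite{G-L}, \cite{Gabor}, \cite{CP}) produces, for $N_0$ large,
$$L\bigl(\ul u - u + s\rho - N_0 \rho^2\bigr) \leq -\epsilon (1 + \mathcal{F})$$
on $\Omega \cap \{\rho \leq 0\}$. Taking $A_0$ large to absorb the $(1+K^{1/2})\mathcal{F}$ bound for $|L T(u - \ul u)|$, and then $B, N$ large to ensure $\Psi \geq \pm T(u - \ul u)$ on the interior part of $\partial(\Omega \cap \{\rho \leq 0\})$, two applications of the maximum principle (one for each sign) give $|T(u - \ul u)| \leq \Psi$ throughout $\Omega \cap \{\rho \leq 0\}$.

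Since $\Psi(0) = T(u - \ul u)(0) = 0$, differentiating in the inward normal direction at the origin yields $|\partial_{x^n} T(u - \ul u)(0)| \leq \partial_{x^n}\Psi(0) \leq C K^{1/2}$; combined with \eqref{boundary-derivatives} and Lemma \ref{C0-C1b} this gives $|u_{\bar n i}(0)| \leq C K^{1/2}$, and hence $|g_{\bar n i}(0)| \leq C K^{1/2}$ for $i < n$ because $\chi$ is smooth. The main obstacle is calibrating the barrier so that the linear piece $-N K^{1/2} \rho$ produces exactly the claimed $O(K^{1/2})$ normal derivative at $0$ while the quadratic pieces $-A N_0 \rho^2$ and $B K |z|^2$ vanish to second order there, and while $L\Psi$ still dominates the $O(K^{1/2}) \mathcal{F}$ error in $L T(u - \ul u)$. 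The $(n-1)$-plurisubharmonic structure enters only via the properties (f1)--(f5), which supply the $-\epsilon(1 + \mathcal{F})$ slack needed for the subsolution step; as the author remarks after Proposition \ref{boundary}, the restriction $p = n-1$ is not needed here, only for the double normal estimate of Section 4.
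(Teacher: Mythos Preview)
Your barrier argument has a genuine gap at the step where you assert
\[
\bigl| L\, T(u - \ul u) \bigr| \leq C (1 + K^{1/2})\, \mathcal{F}.
\]
Because $T = \partial_{t^\alpha} - \eta\,\partial_{x^n}$ with $\eta = \rho_{t^\alpha}/\rho_{x^n}$ has variable coefficients, the commutator $[L,T](u-\ul u)$ contains the cross term
\[
2\,\mathrm{Re}\Big(\tilde F^{p\bar q}\,(\partial_p \eta)\,\partial_{\bar q}\partial_{x^n}(u-\ul u)\Big).
\]
Splitting $\partial_{x^n} = \partial_n + \partial_{\bar n}$, the $\partial_{\bar q}\partial_n u$ part is essentially $g_{\bar q n}$ and contributes $\sum_i f_i|\lambda_i|$; since no $C^2$ bound is yet available and $\lambda\in\mathcal{P}_p$ permits large eigenvalues, this is \emph{not} controlled by $C\mathcal{F}$. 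The $\partial_{\bar q}\partial_{\bar n}u$ part (equivalently the $\partial_{\bar q}\partial_{y^n}(u-\ul u)$ term) is a real second derivative that is not an entry of the complex Hessian at all, and nothing in your barrier bounds it. So the displayed inequality fails, and with it the claim that $L\Psi + LT(u-\ul u)\le 0$.

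The paper's proof (following Collins--Picard) repairs exactly these two defects by enlarging the barrier: it adds
\[
-\frac{1}{K^{1/2}}\sum_{i=1}^n\bigl(\partial_{y^i}(u-\ul u)\bigr)^2
\quad\text{and}\quad
-\frac{1}{K^{1/2}}\sum_{a=1}^{n-1}|\nabla_a(u-\ul u)|^2.
\]
The first term, after applying $L$, produces $\frac{2}{K^{1/2}}\tilde F^{p\bar q}\partial_p\partial_{y^i}(u-\ul u)\,\partial_{\bar q}\partial_{y^i}(u-\ul u)$, which absorbs the bad $\partial_{y^n}$-cross term via Cauchy--Schwarz (Lemmas~\ref{FTu} and \ref{F-partial-u}). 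The second term yields a good contribution $\frac{1}{2nK^{1/2}}\sum_{i\neq r}f_i\lambda_i^2$ (Lemma~\ref{F-gradient-u}), and then Guan's inequality (Lemma~\ref{Guan}) with $\epsilon\sim K^{-1/2}$ converts this into control of $\sum_i f_i|\lambda_i|$. Only after these two additional pieces are in place does the inequality $L(\Psi + T(u-\ul u))\le 0$ close. Note also that your term $BK|z|^2$ should be $BK^{1/2}|z|^2$: as written, $L(BK|z|^2)=BK\mathcal{F}$ cannot be absorbed by $-A_0K^{1/2}\varepsilon\mathcal{F}$ for large $K$.
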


Before proving the above result, we need a few lemmas.
First is a Lemma due to B. Guan. Note that $f$ satisfies the assumptions as in \cite{G}.
Since $\sum f_i \lambda_i \geq 0$, from Lemma 2.20 and Corollary 2.21 in Guan \cite{G}, we have
\begin{lemma}[\cite{G}]
\label{Guan}
For any $\lambda \in \mathcal{P}_p$, index $r$ and $\epsilon > 0$, we have
\[
\sum f_i |\lambda_i | \leq \epsilon \sum_{i\neq r} f_i \lambda_i^2 + \frac{C}{\epsilon} \sum f_i + C,
\]
where $C$ depends on $n$, $p$ and $f(\lambda)$.
\end{lemma}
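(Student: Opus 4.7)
The plan is a direct convexity argument: apply Young's inequality on the ``generic'' indices and use the Euler-type identity $\sum_i f_i \lambda_i = f(\lambda)$ (a consequence of homogeneity (f4)) to tame the singled-out term $f_r |\lambda_r|$, which would otherwise be uncontrolled since the quadratic sum on the right-hand side excludes $i=r$.

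First I would split $\sum_i f_i |\lambda_i| = f_r |\lambda_r| + \sum_{i \neq r} f_i |\lambda_i|$ and dispatch the generic piece by termwise Young's inequality: $f_i |\lambda_i| = \sqrt{f_i}\cdot \sqrt{f_i}\,|\lambda_i| \leq \epsilon f_i \lambda_i^2 + \tfrac{1}{4\epsilon} f_i$, which sums to $\sum_{i \neq r} f_i |\lambda_i| \leq \epsilon \sum_{i \neq r} f_i \lambda_i^2 + \tfrac{1}{4\epsilon} \mathcal{F}$. This takes care of every term except $i=r$.

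The main step, and essentially the only content-bearing one, is controlling the excluded term $f_r |\lambda_r|$. Here I would invoke the homogeneity identity $\sum_i f_i \lambda_i = f(\lambda)$ together with $f(\lambda) \geq 0$ on $\mathcal{P}_p$ (from (f1)); this is where the cone structure enters. A short sign-by-sign argument then yields $f_r |\lambda_r| \leq f(\lambda) + \sum_{i \neq r} f_i |\lambda_i|$ regardless of the sign of $\lambda_r$: if $\lambda_r \geq 0$ then $f_r \lambda_r = f(\lambda) - \sum_{i \neq r} f_i \lambda_i \leq f(\lambda) + \sum_{i \neq r} f_i |\lambda_i|$, while if $\lambda_r < 0$ then $-f_r \lambda_r = -f(\lambda) + \sum_{i \neq r} f_i \lambda_i \leq \sum_{i \neq r} f_i |\lambda_i|$.

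Combining these two observations gives $\sum_i f_i |\lambda_i| \leq f(\lambda) + 2 \sum_{i \neq r} f_i |\lambda_i| \leq 2\epsilon \sum_{i \neq r} f_i \lambda_i^2 + \tfrac{1}{2\epsilon} \mathcal{F} + f(\lambda)$. Since the constant $C$ in the statement is allowed to depend on $f(\lambda)$, absorbing $f(\lambda)$ into $C$ and replacing $\epsilon$ by $\epsilon/2$ (to restore the coefficient of the quadratic term) puts the estimate in the stated form. I do not anticipate any further technical obstacle; all remaining manipulations are standard convexity bookkeeping.
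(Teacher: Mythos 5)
Your proof is correct and is essentially the same argument the paper relies on: the paper simply defers to Lemma 3.4 of Collins--Picard (and Guan's Lemma 2.20/Corollary 2.21), whose proof is exactly your combination of termwise Young's inequality on the indices $i\neq r$ with the Euler identity $\sum_i f_i\lambda_i=f(\lambda)\ge 0$ to absorb the excluded term $f_r|\lambda_r|$. The sign-by-sign case analysis and the final rescaling of $\epsilon$ are all sound, and the leftover $f(\lambda)$ is legitimately absorbed into $C$ since the lemma allows $C$ to depend on $f(\lambda)$.
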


\begin{proof}
By (f1)-(f4) and following the proof of Lemma 3.4 in \cite{CP}, one can 
prove the inequality.
\end{proof}

The second lemma is on a barrier function due to B. Guan \cite{Guan98}, which
is used frequently in literatures to establish the normal-tangential estimate on the boundary.
For small $\delta > 0$ to be determined, define
\[
\Omega_\delta = \Omega\cap \{|z| < \delta\}.
\]
Let $d$ denote the distance function to $\partial M$.
Consider the following barrier function
\[
v = (u - \ul u) + \tau d - N d^2,
\]
where $\tau \ll 1 $ and $N \gg 1$ are two positive constant.
Then, we can prove
\begin{lemma}
\label{Fv}
Suppose $u$ is an admissible solution to the equation \eqref{eqn}.
There exist $\tau, N, \varepsilon, \delta > 0$ depending on $(M, \omega)$, $\ul u$, $\sup_M \psi$, $\inf_M \psi$ and $\chi$,
such that $v \geq 0$ and
\[
\tilde F^{p\bar q} \partial_{p}\partial_{\bar q} v \leq - \varepsilon (1 + \mathcal{F}) \; \mbox{in} \; \Omega_\delta.
\]
\end{lemma}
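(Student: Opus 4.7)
\emph{Positivity of $v$.} By Lemma~\ref{C0-C1b} we have $u\ge \ul u$ on $M$, so $u-\ul u\ge 0$. Since $d\ge 0$ and $\tau d-Nd^2=d(\tau-Nd)$, choosing $\delta\le \tau/(2N)$ gives $\tau-Nd\ge \tau/2>0$ throughout $\Omega_\delta$, so $v\ge 0$ on $\Omega_\delta$.

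\emph{Setting up the differential inequality.} Expanding,
\[
\tilde F^{p\bar q}v_{p\bar q}=\tilde F^{p\bar q}(u-\ul u)_{p\bar q}+(\tau-2Nd)\,\tilde F^{p\bar q}d_{p\bar q}-2N\,\tilde F^{p\bar q}d_pd_{\bar q}.
\]
From matrix concavity of $\tilde F$ on $P_p$ (property (f3)) together with the subsolution hypothesis $\tilde F(\ul g)\ge \tilde\psi=\tilde F(g)$,
\[
\tilde F^{p\bar q}(u-\ul u)_{p\bar q}=\tilde F^{p\bar q}(g-\ul g)_{\bar q p}\le \tilde F(g)-\tilde F(\ul g)\le 0.
\]
For the middle piece, $|\tilde F^{p\bar q}d_{p\bar q}|\le C_1\mathcal F$ from the $C^2$ bound on $d$, so it contributes in magnitude at most $(\tau+2N\delta)C_1\mathcal F$. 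The last piece is nonpositive, since $d_pd_{\bar q}$ is a rank-one positive semidefinite Hermitian matrix and $\tilde F^{p\bar q}$ is positive definite.

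\emph{Producing the bound $-\varepsilon(1+\mathcal F)$.} This is the main technical obstacle: the naive rank-one estimate of the last term yields at best $\min_i f_i\cdot|\nabla d|^2/4$, which need not be comparable to $\mathcal F$. To close the argument I would invoke the Sz\'ekelyhidi dichotomy for $\mathcal C$-subsolutions (an admissible subsolution qualifies because $f$ satisfies (f1)--(f4)): there exist uniform constants $\kappa_0,R_0>0$ such that at every point of $M$ either \emph{(a)} the concavity estimate sharpens to $\tilde F^{p\bar q}(g-\ul g)_{\bar q p}\le -\kappa_0(1+\mathcal F)$, or \emph{(b)} $f_i\ge \kappa_0\mathcal F$ for every $i$. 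In alternative (a) the first summand already dominates, and smallness of $\tau$ and $N\delta$ absorbs the middle term. In alternative (b), shrinking $\delta$ so that $|\nabla d|^2\ge c_0>0$ on $\Omega_\delta$ gives $\tilde F^{p\bar q}d_pd_{\bar q}\ge c_0\kappa_0\mathcal F/4$, and then taking $N$ large enough relative to $C_1$ makes the rank-one term dominate. Either way, the uniform bound $\tilde F^{p\bar q}v_{p\bar q}\le -\varepsilon(1+\mathcal F)$ follows.
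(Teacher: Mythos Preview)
Your argument is correct, but it takes a genuinely different route from the paper's own proof.

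The paper proceeds as follows. Instead of using the plain concavity bound $\tilde F^{p\bar q}(g-\ul g)_{\bar q p}\le 0$, it exploits the \emph{strict} admissibility of $\ul u$: since $\chi+\sqrt{-1}\partial\bar\partial\ul u-\epsilon_0\omega\in P_p(M)$ for some $\epsilon_0>0$, concavity applied to $\ul g-\epsilon_0\omega$ gives
\[
\tilde F^{p\bar q}(u-\ul u)_{p\bar q}\le C-\epsilon_0\,\mathcal F,
\]
where $C$ depends on $\tilde\psi$ and $\ul u$. After choosing $\tau,\delta$ small, this leads to
\[
\tilde F^{p\bar q}v_{p\bar q}\le C-\tfrac{3\epsilon_0}{4}\mathcal F-\tfrac{N}{2}f_1.
\]
The constant $C$ is then absorbed by an \emph{operator-specific} computation: using the explicit formula $f_k=\sum_{k\in\{i_1,\ldots,i_p\}}\frac{f(\lambda)/C_n^p}{\lambda_{i_1}+\cdots+\lambda_{i_p}}$ and AM--GM, the paper shows $Nf_1+\epsilon_0\mathcal F\ge N^{p/n}\epsilon_0^{(n-p)/n}$, which can be made arbitrarily large by taking $N$ big. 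This yields the conclusion.

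Your approach replaces this explicit structure with Sz\'ekelyhidi's $\mathcal C$-subsolution dichotomy. This is valid here (an admissible subsolution is a $\mathcal C$-subsolution, since $f(\ul\lambda+te_i)\to\infty$ for each $i$), and it has the virtue of being operator-independent: the same argument would work for any $f$ satisfying (f1)--(f5). The price is that one imports an external black box rather than giving a self-contained estimate. One small point worth making explicit in your write-up: the parameters must be chosen in the order $\tau$ small, then $N$ large (to handle case~(b)), then $\delta$ small depending on $N$ (so that $N\delta$ is small enough for case~(a)); your sketch hints at this but does not spell it out.
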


\begin{proof}
We compute at a point with coordinates such that $\omega_{\bar k j} = \delta_{kj}$ and ${g^i}_j = \lambda_j {\delta^i}_j$
with the eigenvalues arranged in the descending order $\lambda_1 \geq \cdots \geq \lambda_n$.

Note that $u \geq \ul u$ in $\Omega$, which implies $v \geq d(\tau - N d) > 0$ in $\Omega_\delta$ for sufficiently small $\delta > 0$.
By direct calculations, we see that
\[\begin{aligned}
\tilde F^{p\bar q} \partial_{p}\partial_{\bar q} v
=&\; \tilde F^{p\bar q} ((u-\ul u)_{\bar q p} + (\tau - 2Nd) d_{\bar q p} - 2 N d_{\bar q} d_p).
\end{aligned}\]
Since $ \chi + \sqrt{-1} \partial \bar\partial \ul u \in P_p (M)$, we can find a positive constant $\epsilon_0 $ such that
\[
\chi + \sqrt{-1} \partial \bar\partial \ul u  - \epsilon_0 \omega \in P_p(M).
\]
By the concavity of $\tilde F$, we have
\[\begin{aligned}
& \tilde F(\omega^{i \bar k} (\chi + \sqrt{-1} \partial \bar\partial \ul u  - \epsilon_0\omega )_{\bar k j})
- \tilde F ({g^i}_j)
\leq  \tilde F^{p\bar q} (\ul u_{\bar q p} - \epsilon_0 \omega_{\bar q p} - u_{\bar q p} ).
\end{aligned}\]
Hence, we derive that
\[
\tilde F^{p\bar q} (u_{\bar q p} -\ul u_{\bar q p} + \epsilon_0 \omega_{\bar q p}) \leq C,
\]
where $C$ depends on $\tilde \psi$ and $\ul u$. Therefore, we obtain that
\[
\tilde F^{p\bar q} \partial_{p}\partial_{\bar q} v
\leq C -\epsilon_0 \mathcal{F} - 2N \tilde F^{\bar q p} d_{\bar q} d_p + C (\tau + 2Nd) \mathcal{F}.
\]
Choosing $\tau \ll 1$ and $\delta \ll 1$, we have
\begin{equation}
\label{F-v}
\tilde F^{p\bar q} \partial_{p}\partial_{\bar q} v
\leq C - \frac{3 \epsilon_0}{4} \mathcal{F} - \frac{N}{2} \tilde F^{\bar 1 1},
\end{equation}
since $|\partial d| = \frac{1}{2}$.

Note that $\tilde F^{\bar i i} = f_i$ and $f_1 \leq \cdots \leq f_n$. Then, we have
\[
N \tilde F^{\bar 1 1} + \epsilon_0 \sum_i \tilde F^{\bar i i}
= (N + \epsilon_0) f_1 + \epsilon_0 f_2 + \cdots + \epsilon_0 f_n.
\]
Since
\[
\begin{aligned}
f_k = \sum_{k\in\{i_1, \cdots, i_p\} } \frac{\frac{1}{C_n^p} f(\lambda) }{\lambda_{i_1} + \cdots + \lambda_{i_p} },
\end{aligned}
\]
it follows that
\[\begin{aligned}
  N \tilde F^{\bar 1 1} + \epsilon_0 \sum_i \tilde F^{\bar i i}
\geq &\; N \sum_{1\notin \{i_1, \cdots, i_{p-1}\}} \frac{ \frac{1}{C_n^p} f(\lambda)  }{\lambda_1 + \lambda_{i_1} + \cdots + \lambda_{i_{p-1}}} \\
&\; + \epsilon_0 \sum_{1, 2\notin \{i_1, \cdots, i_{p-1}\}} \frac{ \frac{1}{C_n^p} f(\lambda)  }{\lambda_2 + \lambda_{i_1} + \cdots + \lambda_{i_{p-1}}}\\
&\; + \cdots + \epsilon_0 \; \frac{ \frac{1}{C_n^p} f(\lambda)  }{\lambda_{n-p+1}  + \cdots + \lambda_n  }.
\end{aligned}\]
By the inequality of arithmetic and geometric means, we obtain
\[\begin{aligned}
 N \tilde F^{\bar 1 1} + \epsilon_0 \sum_i \tilde F^{\bar i i}
 \geq N^{p/n} \epsilon_0^{(n-p)/n}.
\end{aligned}\]
Choosing $N$ sufficiently large, we derive from \eqref{F-v} that
\begin{equation}
\label{F-v'}
\tilde F^{p\bar q} \partial_{p}\partial_{\bar q} v
\leq - \frac{ \epsilon_0}{4} \mathcal{F}
\leq - \frac{ \epsilon_0}{8} (\mathcal{F} + 1)
\end{equation}
as $\mathcal{F} \geq p \geq 1$. The lemma is proved.

\end{proof}

In $\Omega_\delta$, we define the tangential vector fields to the level sets of $\rho$:
\[
T_\alpha = \frac{\partial }{\partial t^\alpha} - \frac{\rho_{t^\alpha}}{\rho_{x^n}} \frac{\partial}{\partial x^n},\;
\mbox{where}\; \alpha = 1, \cdots, 2n-1.
\]
We introduce the notation $\mathcal{E}$ to denote terms which can be estimated by
\[
|\mathcal{E}| \leq C (1 + K^{1/2}) \mathcal{F} + C \sum f_i |\lambda_i| + C,
\]
where $C$ only depends on $(M, \omega)$, $\chi$, and $\ul u$. As in Collins-Picard \cite{CP},
we have
\begin{lemma}
\label{FTu}
There exists $\delta > 0$ depending on $(M, \omega)$ such that we can estimate
\begin{equation}
\label{FTu-ineq}
|\tilde F^{p\bar q} \partial_p \partial_{\bar q} T_\alpha (u- \ul u)|
\leq \frac{1}{K^{1/2}} \tilde F^{p \bar q} \partial_p \partial_{y^n} (u- \ul u) \partial_{\bar q} \partial_{y^n} (u- \ul u) + \mathcal{E}.
\end{equation}
\end{lemma}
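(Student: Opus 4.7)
My plan is to set $w=u-\ul u$, write $T_\alpha w = \partial_{t^\alpha}w - b_\alpha\,\partial_{x^n}w$ with $b_\alpha=\rho_{t^\alpha}/\rho_{x^n}$, and expand $\partial_p\partial_{\bar q}(T_\alpha w)$ via the Leibniz rule. Since $\partial_{t^\alpha}$ and $\partial_{x^n}$ are real coordinate vector fields that commute with $\partial_p,\partial_{\bar q}$, this yields
\[
\partial_p\partial_{\bar q}(T_\alpha w) = \partial_{t^\alpha}w_{\bar q p} - b_\alpha\partial_{x^n}w_{\bar q p} - (\partial_p b_\alpha)\partial_{\bar q}\partial_{x^n}w - (\partial_{\bar q}b_\alpha)\partial_p\partial_{x^n}w - (\partial_p\partial_{\bar q}b_\alpha)\partial_{x^n}w.
\]
I would contract each of the five terms with $\tilde F^{p\bar q}$ and estimate them individually.

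For the first two terms, I would differentiate the equation $\tilde F({g^i}_j)=\tilde\psi$ in the real direction $X\in\{\partial_{t^\alpha},\partial_{x^n}\}$. This produces the identity $\tilde F^{p\bar q}\partial_X u_{\bar q p} = \partial_X\tilde\psi + E$, where the non-K\"ahler correction $E$ arises from $\partial_X\omega^{i\bar k}$ paired with ${g^i}_j$ and from $\partial_X\chi_{\bar k j}$; in a diagonalizing frame one checks $|E|\leq C\mathcal F + C\sum_i f_i|\lambda_i|$. Combined with $|\tilde F^{p\bar q}\partial_X\ul u_{\bar q p}|\leq C\mathcal F$ (bounded Hessian of $\ul u$) and the boundedness of $b_\alpha$, both terms lie in $\mathcal E$. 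The fifth term $(\partial_p\partial_{\bar q}b_\alpha)\partial_{x^n}w$ is trivially $\leq CK^{1/2}\mathcal F\subset\mathcal E$ using the bound $|\partial_{x^n}w|\leq 2K^{1/2}$.

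The crux is the two symmetric cross terms. Using $\partial_{x^n}=\partial_n+\partial_{\bar n}$ and $\partial_{y^n}=i(\partial_n-\partial_{\bar n})$, I would decompose
\[
\partial_{\bar q}\partial_{x^n}w = w_{\bar q n} + w_{\bar q\bar n} = 2w_{\bar q n} + i\,\partial_{\bar q}\partial_{y^n}w,
\]
and analogously for $\partial_p\partial_{x^n}w$. The $w_{\bar q n}$ piece, evaluated in a unitary frame diagonalizing $g$, contributes $2\sum_p f_p(\partial_p b_\alpha)w_{\bar p n}$; since $w_{\bar p n}=-\chi_{\bar p n}-\ul u_{\bar p n}=O(1)$ for $p\neq n$ and $w_{\bar n n}=\lambda_n+O(1)$, this is bounded by $C\sum_i f_i|\lambda_i|+C\mathcal F$, hence in $\mathcal E$ (exactly in the spirit of Lemma~\ref{Guan}). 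The $\partial_{\bar q}\partial_{y^n}w$ piece is handled by the weighted Cauchy--Schwarz inequality in the positive form $\tilde F^{p\bar q}$,
\[
|\tilde F^{p\bar q}L_p Y_{\bar q}| \leq \frac{1}{2K^{1/2}}\tilde F^{p\bar q}Y_p\overline{Y_q} + \frac{K^{1/2}}{2}\tilde F^{p\bar q}L_p\overline{L_q},
\]
with $L_p=\partial_p b_\alpha$ (bounded, so the second summand is $\leq CK^{1/2}\mathcal F\subset\mathcal E$) and $Y_{\bar q}=\partial_{\bar q}\partial_{y^n}w$. Summing the two complex-conjugate cross terms produces the coefficient $1/K^{1/2}$ in front of the claimed positive quadratic form on the right-hand side.

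The main obstacle is identifying the correct splitting: writing $\partial_{\bar q}\partial_{x^n}w$ as a linear combination of the Hermitian-matrix entry $w_{\bar q n}$, which lies inside $g$ and is absorbed by Lemma~\ref{Guan} via $\sum_i f_i|\lambda_i|$, and the tangential second derivative $\partial_{\bar q}\partial_{y^n}w$, which supplies the sign-correct quadratic form that must be preserved on the right. The only other subtle point is tracking the non-K\"ahler commutator with $\omega$ when differentiating the equation in the real directions $\partial_{t^\alpha}$ and $\partial_{x^n}$, but this is routine once Lemma~\ref{Guan} is invoked.
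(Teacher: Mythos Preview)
Your proposal is correct and follows essentially the same route as the paper's proof: the Leibniz expansion of $\tilde F^{p\bar q}\partial_p\partial_{\bar q}T_\alpha(u-\ul u)$, the control of the third--order terms by differentiating the equation $\tilde F=\tilde\psi$ (picking up the non-K\"ahler correction bounded by $C\mathcal F+C\sum f_i|\lambda_i|$), and the splitting $\partial_{\bar q}\partial_{x^n}w=2\,\partial_{\bar q}\partial_n w+\sqrt{-1}\,\partial_{\bar q}\partial_{y^n}w$ with the first piece absorbed into $\sum f_i|\lambda_i|$ via $g_{\bar q n}$ and the second handled by weighted Cauchy--Schwarz. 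One cosmetic remark: the parenthetical ``exactly in the spirit of Lemma~\ref{Guan}'' is slightly misplaced---Lemma~\ref{Guan} is not needed at this step, since $\sum f_i|\lambda_i|$ is already part of the definition of $\mathcal E$; that lemma is invoked only later, in the proof of Proposition~\ref{TN}, to trade $\sum f_i|\lambda_i|$ against $K^{-1/2}\sum_{i\neq r}f_i\lambda_i^2$.
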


\begin{proof}
The proof of the above lemma is the same as that of Lemma 4.3 in \cite{CP}.
So we only sketch it here.
Note that
\[\begin{aligned}
 \tilde F^{p\bar q} \partial_p \partial_{\bar q} T_\alpha (u- \ul u)
= &\; \tilde F^{p\bar q} \partial_p \partial_{\bar q} \partial_{t^\alpha} (u - \ul u) -
\frac{\rho_{t^\alpha}}{\rho_{x^n}} \tilde F^{p \bar q} \partial_p \partial_{\bar q} \partial_{x^n} (u -\ul u)\\
- &\; 2 \text{Re} \Big( \tilde F^{p\bar q} \Big( \partial_p \frac{\rho_{t^\alpha}}{\rho_{x^n}}\Big) \partial_{\bar q} \partial_{x^n} (u - \ul u)\Big)
 - \tilde F^{p\bar q} \Big( \partial_p \partial_{\bar q} \frac{\rho_{t^\alpha}}{\rho_{x^n}} \Big)  \partial_{x^n} (u- \ul u).
\end{aligned}\]
The third order terms and the last term on the right hand side can be controlled by $|\mathcal{E}|$.
Since
\[
 \partial_{\bar q} \partial_{x^n} (u - \ul u)
= 2  \partial_{\bar q} \partial_{n} (u - \ul u) + \sqrt{-1} \partial_{\bar q} \partial_{y^n} (u - \ul u)
\]
and
\[
2 \tilde F^{p\bar q} \Big( \partial_p \frac{\rho_{t^\alpha}}{\rho_{x^n}} \Big) \partial_{\bar q} \partial_{n} (u- \ul u)
\leq C \mathcal{F} + 2 \tilde F^{p\bar q} \partial_p \frac{\rho_{t^\alpha}}{\rho_{x^n}} g_{\bar q n}
\leq C \mathcal{F} + C \sum f_i |\lambda_i|,
\]
the third term on the right hand side can be estimated as
\[\begin{aligned}
& \tilde F^{p\bar q} \Big( \partial_p \frac{\rho_{t^\alpha}}{\rho_{x^n}}\Big) \partial_{\bar q} \partial_{x^n} (u - \ul u)\\
\leq &\; \frac{1}{K^{1/2}} \tilde F^{p\bar q} \partial_{p} \partial_{y^n} (u - \ul u)\partial_{\bar q} \partial_{y^n} (u - \ul u)
+ C \sum f_i |\lambda_i| + CK^{1/2} \mathcal{F}
\end{aligned}\]
by Cauchy-Schwarz inequality.
Therefore, the lemma follows.

\end{proof}

To kill the first term in \eqref{FTu-ineq}, Collins-Picard \cite{CP} used the quadratic gradient term 
in constructing the barrier function.
\begin{lemma}
\label{F-partial-u}
The quadratic gradient term $\frac{1}{K^{1/2}} (\partial_{y^i} (u - \ul u) )^2$ can be estimated as
\[
\frac{1}{K^{1/2}} \tilde F^{p\bar q}  \partial_p \partial_{\bar q} (\partial_{y^i} (u - \ul u) )^2
\geq \frac{2}{K^{1/2}} \tilde F^{p\bar q}  \partial_{p}\partial_{y^i} (u - \ul u) \partial_{\bar q}\partial_{y^i} (u - \ul u)
+ \mathcal{E}.
\]
\end{lemma}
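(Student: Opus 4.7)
The plan is to expand $(\partial_{y^i}(u-\ul u))^2$ via the product rule, recognize the quadratic Hessian term as the desired main contribution, and absorb the remaining term into $\mathcal{E}$ using the differentiated equation together with the $C^1$ bound from Lemma \ref{C0-C1b}.

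Set $w := \partial_{y^i}(u-\ul u)$. A direct computation gives
\[
\partial_p \partial_{\bar q}(w^2) = 2\,(\partial_p w)(\partial_{\bar q} w) + 2 w\, \partial_p \partial_{\bar q} w,
\]
and contracting with $K^{-1/2}\tilde F^{p\bar q}$ turns the first summand into exactly the expression claimed. It remains to verify that
\[
R \; := \; \frac{2 w}{K^{1/2}}\,\tilde F^{p\bar q}\partial_p \partial_{\bar q} w
\]
has size $\mathcal{E}$. By Lemma \ref{C0-C1b} and the definition of $K$, one has $|w|\leq |\nabla u|+|\nabla \ul u|\leq C K^{1/2}$, so the scalar prefactor $2w/K^{1/2}$ is uniformly bounded. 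The task thus reduces to estimating $\tilde F^{p\bar q}\partial_p\partial_{\bar q} w$.

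For this I would differentiate the equation $\tilde F({g^i}_j)=\tilde\psi$ in the real coordinate direction $y^i$. Because $\partial_{y^i}$ commutes with the Wirtinger derivatives $\partial_p,\partial_{\bar q}$ inside a single chart, this produces
\[
\tilde F^{p\bar q}\partial_p\partial_{\bar q}(\partial_{y^i}u) = \partial_{y^i}\tilde\psi - \tilde F^{p\bar q}\partial_{y^i}\chi_{\bar q p} + E,
\]
where $E$ collects the terms arising from the $y^i$-derivatives of $\omega^{-1}$ hidden inside ${g^a}_b=\omega^{a\bar c}(\chi_{\bar c b}+u_{\bar c b})$. At a diagonalizing point these pieces have schematic form $\tilde F^{p\bar q}(\partial\omega)(\chi+\partial\bar\partial u)_{\bar q p}$, dominated by $C\sum f_i|\lambda_i|+C\mathcal{F}$, and thus fit inside $\mathcal{E}$. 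A parallel but easier estimate for $\tilde F^{p\bar q}\partial_p\partial_{\bar q}(\partial_{y^i}\ul u)$ contributes only $O(\mathcal{F}+1)$. Multiplying by the bounded prefactor preserves the bound, so $|R|\leq \mathcal{E}$ and the claimed inequality follows.

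The main bookkeeping point is the extraction of $E$: none of these pieces would arise if $\omega$ were K\"ahler, and one must verify that every cross-term of the type ``(first derivative of $\omega^{-1}$)$\cdot$(Hessian of $u$)'' is absorbed by the $C\sum f_i|\lambda_i|+C(1+K^{1/2})\mathcal{F}$ slack built into $\mathcal{E}$. This is essentially the same arithmetic carried out in the analogous lemma in Collins-Picard \cite{CP}, and it is the only nontrivial computation in the argument.
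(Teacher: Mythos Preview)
Your argument is correct and follows the same route as the paper: expand via the product rule, keep the quadratic Hessian term, and control the cross term by differentiating $\tilde F({g^i}_j)=\tilde\psi$ in $y^i$. The paper's own proof is terser, recording the remainder simply as $-C\mathcal{F}-C$, whereas you track the extra $C\sum f_i|\lambda_i|$ contribution from $\partial_{y^i}\omega^{-1}$ explicitly; both are absorbed by $\mathcal{E}$, so there is no substantive difference.
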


\begin{proof}
We have
\[\begin{aligned}
& \frac{1}{K^{1/2}} \tilde F^{p\bar q}  \partial_p \partial_{\bar q} (\partial_{y^i} (u - \ul u) )^2\\
=&\; \frac{2}{K^{1/2}} \tilde F^{p\bar q}  \partial_p  \partial_{y^i}(u - \ul u) \partial_{\bar q} \partial_{y^i} (u - \ul u)
+ \frac{2}{K^{1/2}} \partial_{y^i} (u - \ul u)  \tilde F^{p\bar q}  \partial_p \partial_{\bar q} \partial_{y^i} (u - \ul u) \\
\geq &\; \frac{2}{K^{1/2}} \tilde F^{p\bar q}  \partial_p  \partial_{y^i}(u - \ul u) \partial_{\bar q} \partial_{y^i} (u - \ul u)
- C\mathcal{F} - C,
\end{aligned}\]
where we used the first derivatives of the equation $\tilde F ({g^i}_j) = \tilde \psi$ in the inequality.
\end{proof}

Let $e_a = {e^i}_a \partial_i$ be a local orthonormal frame of $T^{1,0} X$ such that $\{e_a\}_{a=1}^{n-1}$
are tangential to the level sets of $\rho$.
The following lemma can be obtained following the lines of subsection 4.5 in \cite{CP}.
\begin{lemma}
\label{F-gradient-u}
There exists an index $1\leq r \leq n$ such that the following estimate hold:
\[\begin{aligned}
&\; \frac{1}{K^{1/2}} \tilde F^{p\bar q}  \partial_p \partial_{\bar q} \sum_{a=1}^{n-1} |\nabla_a (u- \ul u)|^2\\
\geq &\; \frac{1}{2n K^{1/2}} \sum_{i\neq r} f_i \lambda_i^2 -
\frac{1}{K^{1/2}} \sum_{i=1}^n \tilde F^{p\bar q}  \partial_{p}\partial_{y^i} (u - \ul u) \partial_{\bar q}\partial_{y^i} (u - \ul u)
+ \mathcal{E}.
\end{aligned}\]
\end{lemma}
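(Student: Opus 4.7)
The plan is to compute at a fixed point $q_0 \in \Omega_\delta$ in coordinates that simultaneously diagonalize $\omega$ and $g^i{}_j$, expand $\tilde F^{p\bar q}\partial_p\partial_{\bar q}(Z_a \bar Z_a)$ where $Z_a := e^i{}_a \partial_i (u - \ul u)$, isolate the dominant non-negative contribution coming from the antiholomorphic derivatives of $Z_a$, and then average over $a$ to extract one missing direction using the tangentiality of the frame. First I would pick coordinates at $q_0$ so that $\omega_{\bar k j} = \delta_{kj}$, $g^i{}_j = \lambda_j \delta^i{}_j$ with $\lambda_1 \geq \cdots \geq \lambda_n$, and $\tilde F^{p\bar q} = f_p \delta_{pq}$. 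The tangential $(1,0)$-frame $\{e_a\}_{a=1}^{n-1}$ will be extended by a unit $(1,0)$-normal $e_n$ to an $\omega$-unitary frame at $q_0$, giving $\sum_{a=1}^{n} e^i{}_a \overline{e^j{}_a} = \delta_{ij}$ and in particular the key identity $\sum_{a=1}^{n-1} |e^p{}_a|^2 = 1 - |e^p{}_n|^2$.

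Next I would expand
\[
\tilde F^{p\bar q}\partial_p\partial_{\bar q}|Z_a|^2 = \tilde F^{p\bar q}\bigl(\partial_p Z_a\,\partial_{\bar q}\bar Z_a + \partial_{\bar q} Z_a\,\partial_p \bar Z_a\bigr) + 2\,\mathrm{Re}\bigl(\bar Z_a\,\tilde F^{p\bar q}\partial_p\partial_{\bar q} Z_a\bigr),
\]
and argue that the third-order piece lands in $\mathcal E$: differentiating $\tilde F(g^i{}_j) = \tilde \psi$ in the $e_a$-direction bounds $\tilde F^{p\bar q} (u-\ul u)_{p\bar q i}$ by $C(1 + \cF)$, and combined with $|\bar Z_a|\leq C K^{1/2}$ this is $\mathcal E$; the commutator corrections coming from derivatives of $e^i{}_a$ are handled similarly by the gradient bound and Lemma \ref{Guan}. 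In the chosen coordinates the ``inner'' part simplifies to $\sum_p f_p(|\partial_p Z_a|^2 + |\partial_p \bar Z_a|^2)$, and only the second summand will be retained while $|\partial_p Z_a|^2 \geq 0$ is discarded.

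The key computation is the estimate $|\partial_p \bar Z_a|^2 \geq |e^p{}_a|^2\lambda_p^2 - C K^{1/2}|\lambda_p| - CK$, which follows from $(u-\ul u)_{p\bar j}(q_0) = \lambda_p \delta_{pj} + O(1)$. Summing over $a = 1,\ldots,n-1$, invoking the identity $\sum_a |e^p{}_a|^2 = 1 - |e^p{}_n|^2$, dividing by $K^{1/2}$, and absorbing the $O(|\lambda_p|)$ and $O(K^{1/2}\cF)$ terms into $\mathcal E$ via Lemma \ref{Guan} then gives
\[
\frac{1}{K^{1/2}}\tilde F^{p\bar q}\partial_p\partial_{\bar q}\sum_{a=1}^{n-1}|Z_a|^2 \geq \frac{1}{K^{1/2}}\sum_{p} f_p (1 - |e^p{}_n|^2)\lambda_p^2 + \mathcal E.
\]
To finish, I would choose $r$ at $q_0$ to be an index maximizing $|e^r{}_n|^2$; since $\sum_p |e^p{}_n|^2 = 1$, one has $|e^r{}_n|^2 \geq 1/n$ and hence $1 - |e^p{}_n|^2 \geq 1/n$ for every $p \neq r$, so that $\sum_p f_p (1 - |e^p{}_n|^2)\lambda_p^2 \geq \tfrac{1}{n}\sum_{p\neq r} f_p \lambda_p^2 \geq \tfrac{1}{2n}\sum_{i\neq r} f_i \lambda_i^2$. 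The non-negativity of the subtracted $y^i$-term on the right-hand side of the claim then allows the bound to be weakened to the stated inequality.

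The step I expect to be the main obstacle is the careful accounting of lower-order error pieces: one has to check that the linear-in-$\lambda_p$ cross term from expanding $|\partial_p \bar Z_a|^2$, the commutator corrections in $\partial_p\partial_{\bar q}Z_a$, and the third-order remainder all cleanly absorb into $\mathcal E$ after division by $K^{1/2}$, which ultimately relies on Lemma \ref{Guan} to rewrite $\sum f_i|\lambda_i|$ as $\epsilon\sum_{i\neq r}f_i \lambda_i^2 + C\cF/\epsilon + C$. The geometric input driving the whole estimate is the single identity $\sum_{a=1}^{n-1}|e^p{}_a|^2 = 1 - |e^p{}_n|^2$, which is what permits exactly one index to be removed from the sum.
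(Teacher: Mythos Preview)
Your proposal is correct and matches the paper's approach (which simply defers to Collins--Picard): the paper's intermediate quantity $\sum_{a=1}^{n-1}\tilde F^{p\bar q}g_{\bar q a}g_{\bar a p}$ equals $\sum_p f_p(1-|e^p{}_n|^2)\lambda_p^2$ in the diagonalizing coordinates, so your extraction of the main term from $\sum_p f_p|\partial_p\bar Z_a|^2$ together with the unitary-completion identity is exactly the mechanism behind the paper's two displayed inequalities, and adding the nonnegative $y^i$-term back at the end as a weakening is legitimate. One small point to tighten: the implication ``$|e^r{}_n|^2\geq 1/n$, hence $1-|e^p{}_n|^2\geq 1/n$ for $p\neq r$'' is not justified by the stated premise; the clean argument is that for $p\neq r$ one has $2|e^p{}_n|^2\leq |e^p{}_n|^2+|e^r{}_n|^2\leq 1$, so in fact $|e^p{}_n|^2\leq 1/2$.
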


\begin{proof}
Following the calculations in \cite{CP}, one can arrive at
\[\begin{aligned}
&\; \frac{1}{K^{1/2}} \tilde F^{p\bar q}  \partial_p \partial_{\bar q} \sum_{a=1}^{n-1} |\nabla_a (u- \ul u)|^2\\
\geq &\; \frac{1}{2 K^{1/2}} \sum_{a=1}^{n-1} \tilde F^{p\bar q} g_{\bar q a} g_{\bar a p} -
\frac{1}{K^{1/2}} \sum_{i=1}^n \tilde F^{p\bar q}  \partial_{p}\partial_{y^i} (u - \ul u) \partial_{\bar q}\partial_{y^i} (u - \ul u)
+ \mathcal{E}
\end{aligned}\]
and 
\[
\sum_{a=1}^{n-1} \tilde F^{p\bar q} g_{\bar q a} g_{\bar a p} \geq \frac{1}{n} \sum_{i\neq r} f_i \lambda_i^2
\]
for some index $r$. Then, the lemma is proved.
\end{proof}

Now we are in position to prove Proposition \ref{TN}.
\begin{proof}[Proof of Proposition \ref{TN}]
Consider the following barrier function due to Collins-Picard,
\[
\Psi = A K^{1/2} v + B K^{1/2} |z|^2 - \frac{1}{K^{1/2}} \sum_{i=1}^{n} (\partial_{y^i} (u - \ul u) )^2
-  \frac{1}{K^{1/2}} \sum_{a=1}^{n-1} |\nabla_a (u- \ul u)|^2.
\]
Denote
$\mathcal{L} = \tilde F^{p\bar q}  \partial_p \partial_{\bar q}$.
By Lemma \ref{Fv}, Lemma \ref{FTu}, Lemma \ref{F-partial-u} and Lemma \ref{F-gradient-u}, we obtain
\[\begin{aligned}
& \mathcal{L} (\Psi + T_{\alpha} (u - \ul u))\\
\leq &\; - A \varepsilon K^{1/2} (1 + \mathcal{F}) + B K^{1/2} \sum_i f_i - \frac{1}{2n K^{1/2}} \sum_{i\neq r} f_i \lambda_i^2\\
&\; - \frac{1}{K^{1/2}} \sum_{i=1}^{n-1} \tilde F^{p\bar q}  \partial_{p}\partial_{y^i} (u - \ul u) \partial_{\bar q}\partial_{y^i} (u - \ul u)
+ |\mathcal{E}|.
\end{aligned}\]
Choosing $A \gg 1$, we get
\[\begin{aligned}
& \mathcal{L} (\Psi + T_{\alpha} (u - \ul u))\\
\leq &\; - \frac{A \varepsilon}{2} K^{1/2} (1 + \mathcal{F}) - \frac{1}{2n K^{1/2}} \sum_{i\neq r} f_i \lambda_i^2 + C \sum_i f_i |\lambda_i|.
\end{aligned}\]
By Lemma \ref{Guan} with $\epsilon = \frac{1}{2nCK^{1/2}}$, we see that
\[
\mathcal{L} (\Psi + T_{\alpha} (u - \ul u)) \leq - \frac{A \varepsilon}{2} K^{1/2} (1 + \mathcal{F})
+ \frac{C}{\epsilon} \sum_i f_i + C \leq 0
\]
for $A \gg 1$.

On the boundary $\partial M \cap \Omega_\delta$,
we have $\partial_{y^i} (u - \ul u) = - \partial_{x^n} (u - \ul u) \partial_{y^i} \zeta$
and by Lemma \ref{C0-C1b} we obtain
\[
(\partial_{y^i} (u - \ul u) )^2 \leq C |z|^2.
\]
On $\partial M$, since $\zeta_{t^\alpha} = -\frac{\rho_{t^\alpha}}{\rho_{x^n}}$, we have
\[
T_{\alpha } (u - \ul u) = \partial_{t^\alpha} (u - \ul u) + \zeta_{t^\alpha} \partial_{x^n} (u- \ul u) = 0
\]
and $\nabla_a (u- \ul u) =0$. Recall that $v\geq 0$. Hence, we see that
\[
\Psi + T_\alpha (u - \ul u) \geq B K^{1/2} |z|^2 - \frac{C}{K^{1/2}} |z|^2 \geq 0
\]
on $\partial M \cap \Omega_\delta$ for $B \gg 1$.
On the piece $\partial B_\delta \cap \Omega_\delta$, for sufficiently large $B$, we have
\[
\Psi + T_\alpha (u - \ul u) \geq B K^{1/2} \delta^2 - C K^{1/2} \geq 0.
\]
It follows that $\Psi + T_\alpha (u - \ul u) \geq 0$ on $\Omega_\delta$ by the maximum principle.
Since
\[
[\Psi + T_\alpha (u - \ul u)](0) = 0,
\]
we therefore derive that
\[
AK^{1/2} \partial_{x^n} v (0) + \partial_{x^n} \partial_{t^\alpha} (u - \ul u) (0)
- \partial_{x^n} (\frac{\rho_{t^\alpha}}{\rho_{x^n}})(0) \partial_{x^n} (u - \ul u)(0) \geq 0.
\]
By the boundary gradient estimate Lemma \ref{C0-C1b}, we conclude
\[
\partial_{x^n} \partial_{t^\alpha} (u - \ul u) (0) \geq - CK^{1/2}.
\]
Similarly, consider $\Psi - T_{\alpha} (u - \ul u)$ to conclude that
\[
\partial_{x^n} \partial_{t^\alpha} (u - \ul u) (0) \leq CK^{1/2}.
\]
It follows that
$|g_{\bar n i}| (0) \leq CK^{1/2}$,
which finishes the proof of Proposition \ref{TN}.
\end{proof}

\section{Boundary Double Normal Estimate}

Let $p\in \partial M$ be a boundary point. As before, we choose coordinates $z= (z^1, \cdots, z^n)$
such that $p$ corresponds to the origin and $\omega_{\bar k j} (0) = \delta_{kj}$.
By rotating the coordinates we can assume that $\frac{\partial}{\partial x^n}$ is the inner normal vector to the
boudary $\partial \Omega$ at $p$.
Furthermore, the matrix $g_{\bar k j} = \chi_{\bar k j} + u_{\bar k j}$ has the form
\[
 g =
\left( \begin{array}{ccccc}
 \lambda_1' & 0  & \cdots & 0 & g_{\bar 1 n} \\
 0 & \lambda_2'  & \cdots & 0 & g_{\bar 2 n}\\
\vdots & \vdots &  \ddots & \vdots & \vdots \\
 0 & 0 & \cdots & \lambda_{n-1}' & g_{\ol{n-1} n}\\
 g_{\bar n 1} & g_{\bar n 2} & \cdots & g_{\bar n n-1} & g_{\bar n n}
\end{array} \right).
\]
Since $g \in P_p \subset P_n$, we have $\tr_\omega g > 0$. Therefore, we only need to
derive an upper bound for $g_{\bar n n}$ as $| \lambda_i' | \leq C$ for $1 \leq i \leq n-1$ by
Lemma \ref{C0-C1b} and the estimate \eqref{pure-tangential-C2}.
Note that for $p = n-1$, in local coordinates, the equation \eqref{eqn} can be written in an equivalent form as below
\begin{equation}
\label{eqn'}
\begin{aligned}
&\; (\tr_{\omega} g - g_{\bar 1 1}) \cdots (\tr_{\omega} g - g_{\bar n n}) \\
=&\; \psi + \sum_{i=1}^{n-1} |g_{\bar n i}|^2 (\tr_{\omega} g - g_{\bar 1 1}) \cdots
\widehat{(\tr_{\omega} g - g_{\bar i i})} \cdots (\tr_{\omega} g - g_{\ol{n-1} n-1}),
\end{aligned}
\end{equation}
where the wide hat is used to denote the term which does not appear.
Without loss of generality, we also assume $\lambda_1' \geq \cdots \geq \lambda_{n-1}'$.
By \eqref{eqn'} and the tangential-normal estimate \eqref{tangential-normal-C2}, we can estimate
\[
[ (\tr_{\omega} g - g_{\bar 1 1}) (\tr_{\omega} g - g_{\bar n n} ) -CK ]
(\tr_{\omega} g - g_{\bar 2 2}) \cdots (\tr_{\omega} g - g_{\ol{n-1} n-1} ) \leq C.
\]

Now we divide the proof into two cases. First, if $(\tr_{\omega} g - g_{\bar 1 1}) (\tr_{\omega} g - g_{\bar n n} ) - CK > 0$,
we have, by Lemma \ref{thm-lowerbound} below,
\[
c_0^{n-2} [(\tr_{\omega} g - g_{\bar 1 1}) (\tr_{\omega} g - g_{\bar n n} ) - CK] \leq C,
\]
from which
we derive that
\begin{equation}
\label{normal-normal-C2}
\tr_{\omega} g - g_{\bar 1 1} \leq CK.
\end{equation}
Second, if $(\tr_{\omega} g - g_{\bar 1 1}) (\tr_{\omega} g - g_{\bar n n} ) - CK \leq 0$,
from this inequality we can directly get by Lemma \ref{thm-lowerbound}
that
\begin{equation}
\label{doubel-normal-C2}
\tr_{\omega} g - g_{\bar 1 1} \leq C K,
\end{equation}
which finishes the proof of Proposition \ref{boundary}.

Therefore,
our main goal in this section is to prove the following.
\begin{lemma}
\label{thm-lowerbound}
Let $\lambda' \in \mathbb{R}^{n-1}$ be the eigenvalues of the endomorphism $g$ restricted to the subbundle $T^{1,0} \partial M$.
Then, we have
\begin{equation}
\label{lowerbound}
\lambda_1' + \cdots + \lambda_{n-1}' \geq c_0
\end{equation}
for some uniform positive constant $c_0$ depending on $(M, \omega)$, $\chi$, $\psi$ and $\ul u$.
\end{lemma}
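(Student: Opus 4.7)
The plan is to compare the tangential restriction $g|_{T^{1,0}\partial M}$ with the corresponding restriction of the subsolution $\ul g := \chi + \sqrt{-1}\partial\bar\partial \ul u$, exploiting the boundary condition $u = \ul u$ on $\partial M$.

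First, I would establish a uniform lower bound $\tr_\omega(\ul g|_{T^{1,0}\partial M}) \geq c_1 > 0$ on $\partial M$. Let $\ul\mu_1 \geq \cdots \geq \ul\mu_n$ denote the eigenvalues of $\ul g$ and $\ul\lambda'_1 \geq \cdots \geq \ul\lambda'_{n-1}$ those of the tangential restriction. Cauchy's interlacing inequality yields
\[
\sum_{i=1}^{n-1}\ul\lambda'_i \;\geq\; \sum_{i=2}^{n} \ul\mu_i \;=\; \tr_\omega\ul g - \ul\mu_1.
\]
The right-hand side is precisely the smallest factor in $\mathcal{M}_{n-1}^n(\ul g) = \prod_k\bigl(\tr_\omega\ul g - \ul\mu_k\bigr) \geq \inf_M\psi > 0$. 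Since the $C^2$ bound on $\ul u$ bounds the remaining $n-1$ factors from above by a constant $C$ depending on $|\ul u|_{C^2}, |\chi|_{C^0}$, one extracts $\tr_\omega\ul g - \ul\mu_1 \geq \inf_M\psi / C^{n-1} =: c_1 > 0$.

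Next, I would relate $g|_T$ and $\ul g|_T$ on $\partial M$. Applying the boundary second-derivative identity \eqref{boundary-derivatives} to $u - \ul u$ at a boundary point in the standard coordinate chart gives, for tangential complex indices,
\[
g_{\bar\alpha\beta}(0) - \ul g_{\bar\alpha\beta}(0) \;=\; -\partial_{x^n}(u - \ul u)(0)\cdot \rho_{\bar\alpha\beta}(0),\quad 1 \leq \alpha,\beta \leq n-1.
\]
Summing the diagonal entries over $\alpha = 1, \ldots, n-1$ yields
\[
\sum_{\alpha=1}^{n-1} g_{\bar\alpha\alpha}(0) \;=\; \sum_{\alpha=1}^{n-1}\ul g_{\bar\alpha\alpha}(0) \;-\; \partial_{x^n}(u - \ul u)(0)\cdot \sum_{\alpha=1}^{n-1}\rho_{\bar\alpha\alpha}(0),
\]
whose left-hand side equals $\sum_i \lambda'_i$. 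By Lemma \ref{MP}, $u \geq \ul u$ in $M$ with equality on $\partial M$, so $\partial_{x^n}(u - \ul u)(0) \geq 0$, and its magnitude is bounded by the boundary gradient estimate in Lemma \ref{C0-C1b}.

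The hard step, and the main obstacle of the proof, is the sign of the Levi-form trace $\sum_\alpha \rho_{\bar\alpha\alpha}(0)$. When it is nonpositive at the boundary point, the correction term is $\geq 0$ and the lemma follows immediately from the first step with $c_0 = c_1$. When it is positive (a ``pseudoconvex'' direction), the naive scalar trace comparison is insufficient on its own; here I expect one must exploit the full operator identity between $g|_T$ and $\ul g|_T$ together with the admissibility of $g$, the equation in the form \eqref{eqn'}, and the tangential-normal estimate \eqref{tangential-normal-C2}, arguing by contradiction: if $\sum_i \lambda'_i(p_0) < c_0$ at some boundary point $p_0$, then substituting into \eqref{eqn'} and combining with the uniform positivity $\psi \geq \inf_M\psi > 0$ should contradict the a priori bounds on the tangential and mixed normal-tangential components of $g$. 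This pseudoconvex case is where the principal technical work lies.
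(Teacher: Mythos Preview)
Your setup is correct and your treatment of the ``nonpositive Levi trace'' case is fine, but the hard case is not handled, and the sketch you give for it does not work. The contradiction you hope to extract from \eqref{eqn'} is illusory: writing $S=\sum_\alpha\lambda'_\alpha$ and $T=g_{\bar n n}$, equation \eqref{eqn'} at the boundary point reads, to leading order as $T\to\infty$,
\[
S\cdot T^{n-1}\;\approx\;\psi + \Big(\sum_i|g_{\bar n i}|^2\Big)\cdot T^{n-2},
\]
so for any small $S>0$ the equation is satisfied with $T\approx CK/S$. No contradiction arises, because $T=g_{\bar n n}$ is precisely the quantity not yet controlled at this stage; the logic of Section~4 is that one first proves the present lemma and \emph{then} feeds the lower bound on $S$ into \eqref{eqn'} to bound $T$. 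Your proposal reverses this and becomes circular.

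The paper's proof uses a different mechanism entirely, the Caffarelli--Nirenberg--Spruck barrier. One introduces a real parameter $t$ and the family $A_t = t(\chi_{\bar\alpha\beta}+\ul u_{\bar\alpha\beta})(0)-\eta_0\rho_{\bar\alpha\beta}(0)$, where $\eta_0=(u-\ul u)_{x^n}(0)$; by construction $A_1$ has eigenvalues $\lambda'$, and one lets $t_0<1$ be the first time the eigenvalues hit $\partial\mathcal{P}_{n-1}$. Since $(1-t_0)(\chi+\ul u)_{\bar\alpha\beta}(0)+A_{t_0}=(\chi+u)_{\bar\alpha\beta}(0)$ and $\tr A_{t_0}=0$, a uniform bound $t_0\le 1-\kappa_0$ immediately yields $\sum\lambda'_i\ge\kappa_0\sum\ul\lambda'_i\ge\kappa_0\varrho_0$. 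The bound on $t_0$ is obtained by constructing an explicit local comparison function $\Psi=\ul u-\frac{\eta_0}{t_0}\rho+(l_iz^i+l_{\bar i}\bar z^i)\rho+LD^2+\varepsilon(|z|^2-C_0^{-1}x^n)$, proving via a tangential trace operator $\Lambda$ that $\chi+\sqrt{-1}\partial\bar\partial\Psi$ lies outside $\mathcal{P}_{n-1}^\sigma$, and applying the maximum principle (Lemma~\ref{MP}) to get $u\le\Psi$ on $\Omega_\delta$; the normal derivative inequality at the origin then forces $t_0\le(1+\varepsilon\eta_0^{-1}C_0^{-1})^{-1}$. This barrier construction, together with the verification that $\Lambda(\chi+\sqrt{-1}\partial\bar\partial\Phi)\le 0$ (which crucially uses that $\sum_\alpha\rho_{\bar\alpha\alpha}(0)\ge\theta_0>0$ whenever $t_0\ge 1/2$), is the missing core of the argument.
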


\begin{proof}[Proof of Theorem \ref{thm-lowerbound}]
For any given $p \in \partial M$, choose coordinates such that the point $p$ corresponds to the origin
and the metric $\omega_{\bar k j} (0) = \delta_{kj}$.
By orthogonally rotating the coordinates, we can assume that $T_p^{1,0}(\partial M)$
is spanned by $\{\frac{\partial}{\partial z^1}, \cdots, \frac{\partial}{\partial z^{n-1}}\}$
and $x^n$ is in the direction of the inner normal vector at the origin.

We shall use Greek indices $\alpha,\beta \in \{1, \cdots, n-1\}$ for tangential directions below.
Also, we will denote the eigenvalues of
$(\chi_{\bar \alpha \beta} + u_{\bar \alpha \beta} ) (0)$ and
$(\chi_{\bar \alpha \beta} + \ul u_{\bar \alpha \beta} ) (0)$
by $\lambda' = (\lambda_1', \cdots, \lambda_{n-1}')$ and
$\ul \lambda' = (\ul{\lambda_1'}, \cdots, \ul{\lambda_{n-1}'})$, respectively.
From the formula \eqref{boundary-derivatives}, we have
\begin{equation}
\label{boundary-derivatives-2}
(\chi_{\bar \alpha \beta} + u_{\bar \alpha \beta} ) (0) = (\chi_{\bar \alpha \beta} + \ul u_{\bar \alpha \beta} ) (0)
- (u - \ul u)_{x^n} (0) \rho_{\bar \alpha \beta} (0).
\end{equation}
For convenience, we denote
\[
\eta_0 := (u - \ul u)_{x^n} (0).
\]
Note that $\eta_0 \geq 0$.

Suppose first that $\eta_0 = 0$. Then, we have $\lambda' = \ul \lambda'$.
Since $\ul u$ is admissible, it means $(\chi_{\bar k j} + \ul u_{\bar k j} ) (0) \in P_{n-1}$.
There exists a positive constant $\varrho_0$ depending on $\ul u$ such that
$\ul{\lambda_1'} + \cdots + \ul{\lambda_{n-1}'} \geq \varrho_0$,
which finishes the proof of \eqref{lowerbound}.

Now, we may assume $0< \eta_0 \leq C$.
For a real parameter $t$, consider the family of $(n-1)\times (n-1)$ matrices
\[
A_t = t (\chi_{\bar \alpha \beta} + \ul u_{\bar \alpha \beta} ) (0) - (u - \ul u)_{x^n} (0) \rho_{\bar \alpha \beta} (0).
\]
Note that, at $t = 1$, we have $\lambda (A_1) = \lambda' \in \mathcal{P}_{n-1} \subset \mathbb{R}^{n-1}$.
When $t$ goes to $-\infty$, we have $\lambda (A_t) \notin \mathcal{P}_{n-1}$.
Let $t_0 < 1$ be the first value of $t$ that when decreasing from $+\infty$ the eigenvalues of $A_t$
hit the boundary of the cone:
$\lambda(A_{t_0}) \in \partial \mathcal{P}_{n-1}$.
Our goal is to show that $t_0$ cannot be too close to $1$, i.e.
\begin{equation}
\label{t-bound}
t_0 \leq 1 - \kappa_0,
\end{equation}
for a uniform $\kappa_0 > 0$.
Observe that
\[
(1- t_0) (\chi_{\bar \alpha \beta} + \ul u_{\bar \alpha \beta} ) (0) + A_{t_0} =
(\chi_{\bar \alpha \beta} + u_{\bar \alpha \beta} ) (0).
\]
Assuming \eqref{t-bound}, we can prove \eqref{lowerbound}.
We have
\[\begin{aligned}
\lambda_1' + \cdots + \lambda_{n-1}' = (1-t_0) ( \ul{\lambda_1'} + \cdots + \ul{\lambda_{n-1}'} )
\geq \kappa_0 \varrho_0.
\end{aligned}\]
So \eqref{lowerbound} is established.

Now we prove \eqref{t-bound}.
Let $\Omega$ be a boundary chart containing $p \in \partial M$. Define $\Omega_\delta = \Omega \cap B_{\delta} (0)$ for small $\delta > 0$.
To prove \eqref{t-bound}, 
we consider the following auxiliary functions of Caffarelli-Nirenberg-Spruck \cite{CNS3},
which are defined in $\Omega_\delta$:
\[\begin{aligned}
D(z) =&\; - \rho(z) + \tau |z|^2 \geq 0,\\
\Phi(z) =&\; \ul u(z) - \frac{\eta_0}{t_0} \rho(z) + (l_i z^i + l_{\bar i} \bar z^{i}) \rho (z) + L D(z)^2\\
\Psi (z) = &\; \Phi (z) + \varepsilon (|z|^2 - \frac{1}{C_0} x^n).
\end{aligned}\]
The parameters will be chosen carefully such that $\tau, \varepsilon > 0$ are small constants, $L, C_0 > 1$ are large constants,
and $l_i \in \mathbb{C}$ are bounded with $l_{\bar i} = \ol{l_i}$.
By Lemma \ref{barrier} below,
we have $u \leq \Psi$ on $\Omega_\delta$. Since $\Psi (0) = \ul u(0) = u(0)$, we have
$\partial_{x^n} \Psi (0) \geq \partial_{x^n} u (0)$, which implies that
\[
- (u - \ul u)_{x^n} (0) \geq \frac{\varepsilon}{C_0} + \frac{\eta_0}{t_0} \partial_{x^n} \rho(0).
\]
This is equivalent to
\[
t_0 \leq \frac{1}{1 + \varepsilon \eta_0^{-1} C_0^{-1}}
\]
as $ \partial_{x^n} \rho (0) = -1$. The proof of \eqref{t-bound} is completed with
$\kappa_0 = \frac{\varepsilon C_0^{-1}}{\eta_0 + \varepsilon C_0^{-1}} > 0$ and
the proof of Lemma \ref{thm-lowerbound} is finished.
\end{proof}

Now we show
\begin{lemma}
\label{barrier}
Suppose $t_0 \geq 1/2$. There exist parameters $\delta, \tau, \varepsilon, L, C_0, l_i$ depending only on $(M, \omega)$, $\chi$,
$\inf_M \psi$ and $\ul u$ such that
\[
u (z) \leq \Psi (z)\; \mbox{on}\; \Omega_\delta.
\]
\end{lemma}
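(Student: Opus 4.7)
The plan is to apply the maximum principle, Lemma \ref{MP}, to compare $u$ with $\Psi$ on $\Omega_\delta$: two ingredients are needed, (i) $u \leq \Psi$ on $\partial \Omega_\delta$, and (ii) the eigenvalues of $\omega^{-1}(\chi + \sqrt{-1}\partial\bar\partial \Psi)$ lie outside $\mathcal{P}_{n-1}^\psi$ throughout $\Omega_\delta$. For (i), on $\partial M \cap \Omega_\delta$ we have $\rho = 0$ and $u = \ul u$, so $\Psi - u = L\tau^2 |z|^4 + \varepsilon(|z|^2 - x^n/C_0) \geq 0$ once $C_0$ is large, using $x^n = O(|z|^2)$ on $\partial M$. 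On the spherical piece $M \cap \partial B_\delta$, the term $LD^2 \geq L\tau^2\delta^4/4$ dominates the remaining uniformly bounded quantities once $L$ is large enough.

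For (ii), I compute the Hermitian matrix $B := \chi + \sqrt{-1}\partial\bar\partial \Psi$ at the origin. Using $\rho(0) = 0$, $D(0) = 0$, and $\rho_\alpha(0) = 0$ for $\alpha < n$, neither the linear term $(l_iz^i + l_{\bar i}\bar z^i)\rho$ nor $LD^2$ contributes to the tangential $(n-1)\times(n-1)$ block at $0$, which therefore equals $\tfrac{1}{t_0}A_{t_0} + \varepsilon I_{n-1}$. Since $\lambda(A_{t_0}) \in \partial \mathcal{P}_{n-1}$ forces $\tr A_{t_0} = 0$, this block has trace exactly $(n-1)\varepsilon$. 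The entry $B_{\bar n n}(0) = L/2 + O(1)$, with the dominant term $2LD_n(0)D_{\bar n}(0) = L/2$ coming from $LD^2$. The cross entries $B_{\bar\alpha n}(0)$ for $\alpha < n$ pick up a bounded contribution from $(\chi + \ul u)_{\bar\alpha n}(0)$ and $\rho_{\bar\alpha n}(0)$, together with $-l_{\bar\alpha}/2$ from the linear term; choosing $l_\alpha \in \mathbb{C}$ appropriately makes $B_{\bar\alpha n}(0) = 0$ for every $\alpha < n$. Thus at the origin $B$ is block diagonal, with one large eigenvalue $\approx L/2$ and $n-1$ small eigenvalues summing to $(n-1)\varepsilon$.

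For $p = n-1$, the function $F(\lambda) = \prod_{j=1}^n \sum_{i \neq j}\lambda_i$ is a product of $n$ factors, exactly one of which—the sum of the $n-1$ small eigenvalues—equals $(n-1)\varepsilon$, while the remaining $n-1$ are $O(L)$. Hence
\[
f(\lambda(B))(0) \leq C\,\varepsilon^{1/n} L^{(n-1)/n}.
\]
I fix $L$ first (large enough for the boundary comparison and for the normal-normal dominance), then choose $\varepsilon$ small in terms of $L$ and $\inf_M \tilde\psi$ to guarantee $f(\lambda(B))(0) < \tilde\psi(0)$. Finally, shrinking $\delta$ so that the tangential block trace stays $(n-1)\varepsilon + O(\delta)$, the normal-normal entry stays $\geq L/4$, and the cross entries stay $O(L\delta)$ throughout $\Omega_\delta$, a standard eigenvalue-perturbation argument yields $f(\lambda(B))(z) < \tilde\psi(z)$ on all of $\Omega_\delta$. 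The hypothesis $t_0 \geq 1/2$ serves only to bound $\eta_0/t_0 \leq 2\eta_0$, keeping the coefficients in $\Phi$ uniform.

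The main obstacle is ordering the parameters $(\tau, L, C_0, \varepsilon, l_\alpha, \delta)$ consistently so that every inequality—positivity of $v$, boundary comparison on both pieces of $\partial\Omega_\delta$, off-diagonal cancellation at $0$, dominance of $B_{\bar n n}$, and uniform eigenvalue smallness—holds simultaneously. The essential structural ingredient is that $\lambda(A_{t_0}) \in \partial\mathcal{P}_{n-1}$ forces $\tr A_{t_0} = 0$, which injects the single $\varepsilon$-small factor into $F$ needed to defeat the positive lower bound $\inf_M \tilde\psi$.
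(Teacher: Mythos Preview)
Your strategy has a genuine gap in the step where you extend the estimate from the origin to all of $\Omega_\delta$ by ``shrinking $\delta$''.  The parameter chain does not close.

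You use the $l_\alpha$ to kill the cross entries $B_{\bar\alpha n}(0)$.  With that choice, the tangential trace $\sum_{\alpha<n}B_{\bar\alpha\alpha}(z)$ still carries an uncancelled \emph{linear} term $m_iz^i+m_{\bar i}\bar z^i$ coming from the Taylor expansion of $(\chi+\ul u)_{\bar\alpha\alpha}-\tfrac{\eta_0}{t_0}\rho_{\bar\alpha\alpha}$, together with the contribution $2L\sum_\alpha|D_\alpha|^2+2LD\sum_\alpha D_{\bar\alpha\alpha}$ from $LD^2$.  Since $D_\alpha=O(|z|)$ in fixed coordinates, the positive piece $2L\sum_\alpha|D_\alpha|^2$ is of order $L|z|^2$.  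Hence the smallest of the $n$ sums entering $F$ is only bounded above by
\[
(n-1)\varepsilon \;+\; C\delta \;+\; CL\delta^2,
\]
not by $(n-1)\varepsilon+O(\delta)$ with an $L$-independent constant.  On the other hand the boundary comparison on $M\cap\partial B_\delta$ forces $L\gtrsim\tau^{-2}\delta^{-4}$, so $L\delta^2\gtrsim\delta^{-2}$ blows up, while your criterion $f<\tilde\psi$ needs that small factor to be $\lesssim\sigma^n/L^{n-1}$.  No ordering of $\tau,L,\delta,\varepsilon$ makes these inequalities compatible.

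The paper avoids this by two devices you are missing.  First, it works not in fixed coordinates but with a moving orthonormal frame $\xi_1,\ldots,\xi_{n-1}$ tangent to the level sets of $D$; the associated operator $\Lambda$ then satisfies $\xi_a\,D=0$ identically, so the bad-sign gradient term $2L|\partial D|^2$ never appears in $\Lambda\partial\bar\partial(LD^2)$, leaving only $T_4=2LD\,\Lambda\partial\bar\partial D\leq -\tfrac{\tau\theta_0 L}{2}|z|^2$.  Second, the parameters $l_i$ are spent not on the cross entries at the origin but on cancelling the first-order Taylor coefficients of $\Lambda(\chi+\sqrt{-1}\partial\bar\partial\Phi)$, so that the remaining error is $O(|z|^2)$ and is dominated by $T_4$ for $L\geq C(\tau\theta_0)^{-1}$, a threshold independent of $\delta$.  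This yields $\Lambda(\chi+\sqrt{-1}\partial\bar\partial\Phi)\leq 0$ \emph{pointwise} on $\Omega_\delta$, hence $\mu\notin\mathcal{P}_{n-1}$, and then any sufficiently small $\varepsilon$ (depending on the already-fixed $L$) keeps $\mu+\varepsilon$ outside $\mathcal{P}_{n-1}^\sigma$.  The boundary comparison, the pointwise inequality, and the choice of $\varepsilon$ are thus decoupled, which is exactly what your perturbation-from-the-origin argument cannot achieve.
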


\begin{proof}[Proof of Lemma \ref{barrier}]

Note that the normal vector to $ \partial \mathcal{P}_{n-1} \subset \mathbb{R}^{n-1}$ is $(1, \cdots, 1) \in \mathbb{R}^{n-1}$.
Let
\[
\xi_a = \sum_{i=1}^{n} {\xi^i}_a \frac{\partial}{\partial z^i}
\]
be a local orthonormal frame of $T^{1,0} M$ defined in $\Omega_\delta$
such that $\{\xi_a\}_{a= 1}^{n-1}$ span the holomorphic tangent space of the level sets of $D(z)$.
Without loss of generality, we can assume $\xi_a (0) = \frac{\partial}{\partial z^a}$ for $a\in \{1, \cdots, n-1\}$.
Now we define a local operator on $(1,1)$ forms as follows, for $\beta = \sqrt{-1} \beta_{\bar k j} dz^j \wedge d \bar{z}^k$,
\[
\Lambda \beta = \frac{1}{\sqrt{-1}} \sum_{a=1}^{n-1} \beta(\xi_a, \ol{\xi_a})
= \sum_{a=1}^{n-1} {\xi^j}_a \ol{{\xi^k}_a} \beta_{\bar k j}.
\]
By Lemma \ref{L-Phi} below, we have, in $\Omega_\delta$,
\[
 \Lambda (\chi + \sqrt{-1} \partial \bar \partial \Phi) \leq 0.
\]

Assuming the above, we are in position to establish $u(z) \leq \Phi (z)$ in $\Omega_\delta$.
Let $W= \omega^{-1} (\chi + \sqrt{-1} \partial \bar \partial \Phi)$
with eigenvalues $\mu_1 \geq \cdots \geq \mu_n$.
At a point $z \in \Omega_\delta$, take new coordinates such that $\omega_{\bar k j}= \delta_{kj}$
and ${W^i}_j = \mu_i {\delta^i}_j$.
In these new coordinates, we can write $\xi_a = {\xi^i}_a \partial_{z^i}$ with
${\xi^i}_a$ a unitary matrix and
\[
\Lambda (\chi + \sqrt{-1} \partial \bar \partial \Phi)(z) = \sum_{a=1}^{n-1} \sum_{i=1}^n |{\xi^i}_a|^2 \mu_i.
\]
Let $\xi_0$ be such that $\{\xi_a\}_{a=0}^{n-1}$ is a local unitary frame for $T^{1,0}X$.
Then, by Lemma \ref{L-Phi},
\[\begin{aligned}
0\geq \Lambda (\chi + \sqrt{-1} \partial \bar \partial \Phi)
= \sum_{i=1}^n (1- |{\xi^i}_0|^2) \mu_i
\geq \sum_{i=2}^{n} \mu_i.
\end{aligned}\]
which means the vector $(\mu_2, \cdots, \mu_n)$ is outside of $\mathcal{P}_{n-1} \subset \mathbb{R}^{n-1}$.
It follows that $\mu = (\mu_1, \cdots, \mu_n)$ is outside of $\mathcal{P}_{n-1} \subset \mathbb{R}^{n}$.
Let $\sigma = \inf_M \psi > 0$ and
\[
\mathcal{P}_{n-1}^\sigma = \{\lambda \in \mathcal{P}_{n-1} : f (\lambda) \geq \sigma\}.
\]
It is easy to see that
\[
\lambda( \chi_{\bar k j} + \Psi_{\bar k j}) = \lambda(\chi_{\bar k j} + \Phi_{\bar k j} + \varepsilon \delta_{kj}) \notin \mathcal{P}_{n-1}^\sigma
\]
for sufficiently small $\varepsilon > 0$.
Observe that
\[
\Phi - u \geq -C + L\tau^2 \delta^4\;\mbox{on}\; \partial B_{\delta} \cap M;\;
\Phi - u = L \tau^2 |z|^4\; \mbox{on} \; B_{\delta} \cap \partial M.
\]
So, for sufficiently large $L$, $\Phi - u \geq 0$ on $\partial (B_{\delta} \cap M)$.

Now it is easy to see, on $\partial B_{\delta} \cap M$,
\[
\Psi - u \geq \varepsilon \delta^2 - \frac{\varepsilon}{C_0} x^n \geq 0,
\]
and, on $B_{\delta} \cap \partial M$,
\[
\Psi- u \geq \varepsilon |z|^2 - \frac{\varepsilon}{C_0} x^n \geq  \varepsilon |z|^2 - \frac{\varepsilon}{C_0} O(|z|^2) \geq 0,
\]
for sufficiently large $C_0$. By the maximum principle Lemma \ref{MP}, we conclude that
$u (z) \leq \Psi (z)$ in $\Omega_\delta$, which finishes the proof of Lemma \ref{barrier}.
\end{proof}

Finally, we estimate $\Phi$.
\begin{lemma}
\label{L-Phi}
Let $1/2 \leq t_0 \leq 1$. There exists parameters $\tau$, $L$, $l_i$, $\delta$ depending on $(M, \omega)$, $\chi$, $\inf_M \psi$
and $\ul u$, such that
\[
\Lambda (\chi + \sqrt{-1} \partial \bar \partial \Phi) \leq 0, \; \mbox{in} \; \Omega_\delta.
\]
\end{lemma}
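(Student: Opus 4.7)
The plan is to Taylor-expand $\Lambda(\chi + \sqrt{-1}\partial\bar\partial\Phi)$ at the origin and show that it is nonpositive order-by-order by calibrating the free parameters $l_i,L,\tau,\delta$. The single structural fact driving the whole argument is that the frame $\xi_1,\ldots,\xi_{n-1}$ is chosen tangent to the level sets of $D$: this gives $\sum_i {\xi^i}_a D_i\equiv 0$, and combined with $(D^2)_{i\bar j}=2D_iD_{\bar j}+2DD_{i\bar j}$ it collapses the barrier term to
\[
\Lambda\!\bigl(\sqrt{-1}\partial\bar\partial D^2\bigr) \;=\; 2D\,\Lambda\!\bigl(\sqrt{-1}\partial\bar\partial D\bigr) \;=\; 2D\bigl[\tau(n-1)-\Lambda(\sqrt{-1}\partial\bar\partial\rho)\bigr].
\]
In particular $LD^2$ contributes $0$ to $\Lambda$ at the origin, and contributes $-cLD$ in a neighborhood once one arranges $\Lambda(\sqrt{-1}\partial\bar\partial\rho)(0)>\tau(n-1)$.

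Next, I would check that the constant term vanishes: at the origin $\xi_a(0)=\partial/\partial z^a$, $h(0)=D(0)=0$, and $\rho_\alpha(0)=0$ for $\alpha<n$, so every $l_i$- and $L$-dependent piece drops out and one is left with
\[
\Lambda(\chi + \sqrt{-1}\partial\bar\partial\Phi)(0) \;=\; \frac{1}{t_0}\,\mathrm{tr}\,A_{t_0}.
\]
Since $\mathcal{P}_{n-1}\subset\mathbb{R}^{n-1}$ is the half-space $\{\lambda_1+\cdots+\lambda_{n-1}>0\}$ and $\lambda(A_{t_0})$ lies on its boundary by the definition of $t_0$, this trace is $0$.

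For the first-order terms, differentiating $(h\rho)_{\bar\alpha\alpha}=l_\alpha\rho_{\bar\alpha}+\bar l_\alpha\rho_\alpha+h\rho_{\bar\alpha\alpha}$ at the origin and summing over $\alpha<n$ produces expressions real-linear in $(l_i,\bar l_i)$ which, together with bounded background quantities (including frame-derivative contributions from ${\xi^i}_{a,k}(0)$), determine $\partial_{z^k}\Lambda(\chi + \sqrt{-1}\partial\bar\partial\Phi)(0)$. Since $D_{z^k}(0)=-\rho_{z^k}(0)=0$ for $k<n$, the barrier produces a first-order contribution only in the inner-normal direction $x^n$, scaling with $L$. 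I would therefore use the $2n$ real parameters in $(l_i)\in\mathbb{C}^n$ to solve the (underdetermined) $(2n-1)\times 2n$ real linear system cancelling the linear terms in all directions tangent to $\partial M$ (i.e.\ $z^\alpha,\bar z^\alpha$ for $\alpha<n$, together with $\mathrm{Im}(z^n)$), leaving only the $x^n$-coefficient; by the barrier this coefficient is $(C-cL)$ and hence nonpositive for $L$ large.

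Finally, the uncancelled Taylor remainder is $O(|z|^2)$ from the background plus $-cL\tau|z|^2$ from the $|z|^2$-part of $D$ in the barrier, so for $L\gg 1$ the $|z|^2$-coefficient is also strictly negative, and taking $\delta\ll 1$ gives $\Lambda(\chi + \sqrt{-1}\partial\bar\partial\Phi)\leq 0$ throughout $\Omega_\delta$. The main obstacle will be the sign calibration $\Lambda(\sqrt{-1}\partial\bar\partial\rho)(0)>\tau(n-1)$: this is not automatic but can always be arranged by replacing $\rho$ with $\rho\cdot e^{A|z|^2}$, a modification that shifts $\Lambda(\sqrt{-1}\partial\bar\partial\rho)(0)$ by $A(n-1)$ without changing the defining set $\{\rho=0\}$. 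Once this sign is arranged, the remaining work reduces to bounded linear algebra for the $l_i$ and a standard Taylor remainder estimate.
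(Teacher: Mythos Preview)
Your overall strategy is the same as the paper's---Taylor-expand at the origin, exploit the tangentiality $\xi_a^jD_j\equiv 0$ to collapse the $D^2$-barrier, kill the constant term via $\mathrm{tr}\,A_{t_0}=0$, cancel the linear terms with the $l_i$, and let the barrier absorb the $O(|z|^2)$ remainder. The decomposition and the role of each piece are identified correctly.

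There is, however, a genuine gap at exactly the point you flag as ``the main obstacle''. Your proposed fix, replacing $\rho$ by $\rho\,e^{A|z|^2}$, does \emph{not} shift $\Lambda(\sqrt{-1}\partial\bar\partial\rho)(0)$ at all. Writing $\tilde\rho=\rho\,e^{A|z|^2}$ and expanding,
\[
\tilde\rho_{j\bar k}=\rho_{j\bar k}e^{A|z|^2}+A z^k\rho_j e^{A|z|^2}+A\bar z^j\rho_{\bar k}e^{A|z|^2}+\rho\,(A\delta_{jk}+A^2 z^k\bar z^j)e^{A|z|^2},
\]
so at the origin (where $\rho=0$ and $z=0$) one gets $\tilde\rho_{j\bar k}(0)=\rho_{j\bar k}(0)$ identically. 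More conceptually, the tangential Hessian $\rho_{\bar\alpha\beta}(0)$ restricted to $T^{1,0}_0\partial M$ is (up to the positive scalar $|d\rho(0)|$) the Levi form of $\partial M$ at $0$; multiplying $\rho$ by any positive function fixes this up to a positive scalar and cannot manufacture positivity of $\sum_\alpha\rho_{\bar\alpha\alpha}(0)$ out of nothing. Since both the solvability (with bounded $l_i$) of your linear system and the sign of the barrier term hinge on this positivity, the argument as written does not close.

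What you are missing is that the inequality $\sum_\alpha\rho_{\bar\alpha\alpha}(0)\geq\theta_0>0$ is not something to be arranged---it is a \emph{consequence} of the very identity you already used for the constant term. From $\mathrm{tr}\,A_{t_0}=0$ one reads off
\[
\eta_0\sum_{\alpha=1}^{n-1}\rho_{\bar\alpha\alpha}(0)\;=\;t_0\sum_{\alpha=1}^{n-1}\bigl(\chi_{\bar\alpha\alpha}+\ul u_{\bar\alpha\alpha}\bigr)(0)\;=\;t_0\sum_\alpha\ul\lambda'_\alpha.
\]
Admissibility of the subsolution $\ul u$ gives a uniform lower bound $\sum_\alpha\ul\lambda'_\alpha\geq\theta>0$; combined with $t_0\geq\tfrac12$ and the boundary gradient bound $0<\eta_0\leq C$, this yields $\sum_\alpha\rho_{\bar\alpha\alpha}(0)\geq\theta_0$. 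With this in hand one chooses $\tau<\theta_0/(2(n-1))$, solves $l_i(\tau+\sum_\alpha\rho_{\bar\alpha\alpha}(0))=-m_i$ for $i<n$ and $l_n\sum_\alpha\rho_{\bar\alpha\alpha}(0)=-m_n$ (so all linear terms vanish, not just the tangential ones), and then $T_4=2LD\,\Lambda(\sqrt{-1}\partial\bar\partial D)\leq -\tfrac{\tau\theta_0 L}{2}|z|^2$ absorbs the $O(|z|^2)$ remainder. Once you insert this step, the rest of your outline goes through essentially as in the paper.
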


\begin{proof}[Proof of Lemma \ref{L-Phi}]
Let Greek indices $\alpha, \beta$ take values in $1, \cdots, n-1$.
By the definition of $A_{t_0}$, we see that
\[
0 = t_0 \sum_\alpha
 (\chi_{\bar \alpha \alpha} + {\ul{u}}_{\bar \alpha \alpha} )(0)
- \eta_0 \sum_\alpha \rho_{\bar \alpha \alpha} (0).
\]
Currently, $\ul{\lambda'}$ corresponds to the eigenvalues of $(\chi_{\bar \alpha \beta} + {\ul{u}}_{\bar \alpha \beta} )(0)$.
We can extends $\ul{\lambda'}$ to other points as in Collins-Picard \cite{CP}.
Since $\ul u$ is an admissible subsolution, the vector $\ul{\lambda'}$ lives in a compact set of $\mathcal{P}_{n-1} \subset \mathbb{R}^{n-1}$.
Therefore,
\[
\sum_\alpha \rho_{\bar \alpha \alpha} (0) \geq \frac{t_0}{\eta_0} \sum_\alpha \ul{\lambda'_\alpha} (0)
\geq \frac{1}{2 \eta_0} \inf_{p\in \partial M} \sum_\alpha \ul{\lambda'_\alpha} (p)
\geq \frac{\theta}{2 \eta_0},
\]
where $\theta > 0$ is a constant only depending on $(M, \omega)$, $\chi$, $\ul u$. Since $|\nabla u| \leq C$ on $\partial M$
as proved in Lemma \ref{C0-C1b}, we know $|\eta_0| = | (u - \ul u)_{x^n} (0)| \leq C$, which implies that
\begin{equation}
\label{lower-rho}
\sum_{\alpha} \rho_{\bar \alpha \alpha} (0) \geq \theta_0
\end{equation}
for $\theta_0 > 0$ is a constant only depending on $(M, \omega)$, $\chi$, $\ul u$.

Now, we compute the the quantity in the lemma.
We have
\[
\Lambda (\chi + \sqrt{-1} \partial \bar \partial \Phi) = T_1 + T_2 + T_3 + T_4,
\]
where
\[\begin{aligned}
T_1 = &\; \sum_{a = 1}^{n-1} {\xi^j}_a  \ol{ {\xi^{k}}_a }
[ (\chi_{\bar k j} + \ul{u}_{\bar k j}) - \frac{\eta_0}{t_0} \rho_{\bar kj}]\\
T_2 = &\; \sum_{a = 1}^{n-1} {\xi^j}_a  \ol{ {\xi^{k}}_a } (l_{\bar k} \rho_j + l_j \rho_{\bar k})\\
T_3 = &\; \sum_{a = 1}^{n-1} {\xi^j}_a  \ol{ {\xi^{k}}_a } (l_i z^i + l_{\bar i} \bar z^i) \rho_{\bar k j}\\
T_4 = &\; 2 L \sum_{a = 1}^{n-1} {\xi^j}_a  \ol{ {\xi^{k}}_a }\partial_j D \partial_{\bar k} D
+ 2LD \sum_{a = 1}^{n-1} {\xi^j}_a  \ol{ {\xi^{k}}_a } \partial_j \partial_{\bar k} D.
\end{aligned}\]
At the origin, we have $T_1 (0) = 0$. Therefore, we see
\[
T_1 = m_i z^i +m_{\bar i} \bar z^{i} + O(|z|^2),
\]
for some bounded constants $m_i$ depending only on $(M, \omega), \chi, \underline{u}$.
Since the vector fields $\xi_a$ are tangential to the level sets of $D(z)$,
direct computation gives that
\[
0 = {\xi^j}_a\partial_j D = -{\xi^j}_a \rho_j + \tau {\xi^j}_a \bar z^j.
\]
Substituting the above equality into $T_2$, we get
\[\begin{aligned}
T_2 = &\; \tau \sum_{a=1}^{n-1} {\xi^j}_a \overline{{\xi^k}_a} (l_{\bar k} \bar z^j + l_j z^k)\\
= &\; \tau \sum_{a=1}^{n-1} \big({\xi^j}_a (0) \overline{{\xi^k}_a} (0) + O(|z|)\big) (l_{\bar k} \bar z^j + l_j z^k)\\
= &\; \tau \sum_{a=1}^{n-1} (l_a z^a + l_{\bar a} \bar z^a) + O(|z|^2),
\end{aligned}\]
where in the third equality we used $\xi_a (0) = \partial_a$.
Similarly, we have
\[\begin{aligned}
T_3 = &\; \sum_{a=1}^{n-1} {\xi^j}_a \overline{{\xi^k}_a} (l_i z^i + l_{\bar i} \bar z^i) \rho_{\bar k j} (0) + O(|z|^2)\\
= &\; \sum_{a=1}^{n-1} (l_i z^i + l_{\bar i} \bar z^i) {\xi^j}_a (0) \overline{{\xi^k}_a}(0) \rho_{\bar k j} (0) + O(|z|^2)\\
= &\; (l_i z^i + l_{\bar i} \bar z^i) \sum_{a=1}^{n-1} \rho_{\bar a a} (0) + O(|z|^2).
\end{aligned}\]
Therefore, we obtain
\[\begin{aligned}
T_1 + T_2 + T_3
= &\; 2 \text{Re} \sum_{i=1}^{n-1} \{m_i + \tau l_i + l_i \sum_{a=1}^{n-1} \rho_{\bar a a} (0)\}z^i\\
&\; + 2 \text{Re} \{m_n + l_n \sum_{a=1}^{n-1} \rho_{\bar a a} (0) \} z^n + O(|z|^2).
\end{aligned}\]
By \eqref{lower-rho}, for any $\tau > 0$, we can choose $l_i$ such that
\[
l_i \Big( \tau + \sum_{a=1}^{n-1} \rho_{\bar a a} (0) \Big) = - m_i \; \mbox{for} \; 1\leq i \leq n-1
\; \mbox{and} \; l_n  \sum_{a=1}^{n-1} \rho_{\bar a a} (0)  = -m_n.
\]
Note that $|l_i| \leq \frac{|m_i|}{\theta_0}$. We then arrive at
\[
T_1 + T_2 + T_3 \leq C |z|^2.
\]
Recall that $\xi_a$ is tangential to level sets of $D$. So,
\[
T_4 = 2LD \Lambda \sqrt{-1} \partial \bar\partial D.
\]
At the origin,
\[
\Lambda \sqrt{-1} \partial \bar\partial D (0) = \sum_{a=1}^{n-1} (-\rho_{\bar a a} (0) + \tau)
\leq - \frac{\theta_0}{2}
\]
as long as we choose $0 < \tau < \frac{\theta_0}{2(n-1)}$.
We can choose $\delta > 0$ small enough to ensure that
\[
\Lambda \sqrt{-1} \partial \bar\partial D \leq - \frac{\theta_0}{4} \; \mbox{in} \; \Omega_\delta,
\]
and, therefore,
\[
T_4 \leq - \frac{\tau \theta_0 L }{2} |z|^2 \; \mbox{in} \; \Omega_\delta.
\]
It follows that
\[
T_1 + T_2 + T_3 + T_4 \leq C|z|^2 - \frac{\tau \theta_0 L }{2} |z|^2 \leq 0
\]
for $L \geq C (\tau \theta_0)^{-1}$,
which completes the proof of Lemma \ref{L-Phi}.
\end{proof}


\begin{thebibliography}{9}


\bibitem{BT} E. Bedford and B. A. Taylor, The Dirichlet problem for a complex Monge-Amp\`ere equation, Invent. Math. 37 (1976), no. 1, 1-44.

\bibitem{BT-acta} E. Bedford and B. A. Taylor, A new capacity for plurisubharmonic functions, Acta Math. 149 (1982), no. 1-2, 1-40.

\bibitem{Blocki} Z. B\l ocki, Weak solutions to the complex Hessian equation, Ann. Inst. Fourier (Grenoble) 55 (2005), 1735-1756.

\bibitem{CKNS} L. Caffarelli, J. J. Kohn, L. Nirenberg, and J. Spruck, The Dirichlet problem for nonlinear second-order elliptic equations. II. Complex Monge-Amp\`ere, and uniformly elliptic, equations, Comm. Pure Appl. Math. 38 (1985), no. 2, 209-252.

\bibitem{CLN} L. Caffarelli, Y. Y. Li and L. Nirenberg, Some remarks on singular solutions of nonlinear elliptic equations III: viscosity solutions including parabolic operators, Comm. Pure Appl. Math. 66 (2013), no. 1, 109-143.

\bibitem{CNS3} L. Caffarelli, L. Nirenberg and J. Spruck, The Dirichlet problem for nonlinear second order elliptic equations, III: Functions of the eigenvalues of the Hessian, Acta Math. 155 (1985), 261-301.

\bibitem{ChY} S. Y. Cheng, and S.-T. Yau, On the existence of a complete K\"ahler metric on noncompact complex manifolds and the regularity of Fefferman's equation, Comm. Pure Appl. Math. 33 (1980), 507-544.
\bibitem{C-H}P. Cherrier and A. Hanani, Le probl\`eme de Dirichlet pour les \'equations de Monge-Amp\`ere en m\'etrique hermitienne, Bull. Sci. Math. 123 (1999), no. 7, 577-597.
\bibitem{C-T-W} J. C. Chu, V. Tosatti and B. Weinkove, The Monge-Amp\`ere equation for non-integrable almost complex structures, J. Eur. Math. Soc. (JEMS) 21 (2019), 1949-1984.
    
\bibitem{CGT} T. Collins, B. Guo and F. Tong, On the degeneration of asymptotically conical Calabi-Yau metrics, Math. Ann. 383 (2022), no. 3-4, 867-919.
\bibitem{CP} T. Collins, S. Picard, The Dirichlet Problem for the $k$-Hessian Equation on a complex manifold, arXiv:1909.00447.


\bibitem{Dinew} S. Dinew, Interior estimates for $p$-plurisubharmonic functions, arXiv:2006.12979.

\bibitem{D-K-weak} S. Dinew and S. Ko\l odziej, A priori estimates for complex Hessian equations, Anal. PDE, no. 1 (2014), 227-244.

\bibitem{D-K} S. Dinew and S. Ko\l odziej, Liouville and Calabi-Yau type theorems for complex Hessian equations, Amer. J. Math. 139 (2017), 403-415.




\bibitem{Dong} W. S. Dong, Curvature estimates for $p$-convex hypersurfaces of prescribed curvature, arXiv:2107,13387v2, to appear in Rev. Mat. Iberoam.  https://doi.org/10.4171/RMI/1348.

\bibitem{DLi} W. S. Dong and Y. Y. Li, Estimates for not orthogonally invariant fully nonlinear equations and applications, in preparation.



\bibitem{FLSV} A. Fino, Y. Y. Li, S. Salamon and L. Vezzoni, The Calabi-Yau equation on 4-manifolds over 2-tori, Trans. Amer. Math. Soc. 365 (2013), no. 3, 1551-1575.

\bibitem{FWW1} J. X. Fu, Z. Z. Wang and D. M. Wu, Form-type Calabi-Yau equations, Math. Res. Lett. 17 (2010), 887-903.

\bibitem{FWW2} J. X. Fu, Z. Z. Wang and D. M. Wu, Form-type equations on K\"ahler manifolds of nonnegative
    orthogonal bisectional curvature, Calc. Var. Partial Differential Equations 52 (2015), 327-344.

\bibitem{F-Y2} J. X. Fu and S.-T. Yau, The theory of superstring with flux on non-K\"ahler manifolds and the complex Monge-Amp\`ere equation, J. Differential Geom. 78 (2008), 369-428.


\bibitem{Gauduchon} P. Gauduchon, La 1-forme de torsion d\'une vari\'et\'e hermitienne compacte, Math. Ann. 267
(1984), 495–518.


\bibitem{Guan98} B. Guan, The Dirichlet problem for complex Monge-Amp\`ere equations and regularity of the pluri-complex Green function, Comm. Anal. Geom. 6 (1998), no. 4, 687-703.

\bibitem{G} B. Guan, Second-order estimates and regularity for fully nonlinear elliptic equations on Riemannian manifolds, Duke Math. J. 163 (2014), 1491-1524.


\bibitem{G-L} B. Guan and Q. Li, Complex Monge-Amp\`ere equations and totally real submanifolds, Adv. Math. 225 (2010), 1185-1223.

\bibitem{GN} B. Guan and X. Nie, Second order estimates for fully nonlinear elliptic equations with gradient terms on Hermitian manifolds, arXiv:2108.03308.

\bibitem{GS} B. Guan and J. Spruck, Hypersurfaces of constant curvature in hyperbolic space. II, J. Eur. Math. Soc. (JEMS) 12 (2010), no. 3, 797-817.

\bibitem{GSun}  B. Guan and W. Sun, On a class of fully nonlinear elliptic equations on Hermitian manifolds, Calc. Var. Partial Differential Equations 54 (2015), no. 1, 901-916.


    
\bibitem{GuanP} P. F. Guan, The extremal function associated to intrinsic norms, Ann. of Math. (2) 156 (2002), no. 1, 197-211.     





\bibitem{HL12} F. Harvey and H. Lawson, Geometric plurisubharmonicity and convexity: an introduction, Adv. Math. 230 (2012), 2428-2456.

\bibitem{HL13} F. Harvey and H. Lawson, p-convexity, p-plurisubharmonicity and the Levi problem, Indiana Univ. Math. J. 62 (2013), 149-169.


\bibitem{H-M-W} Z. L. Hou, X. N. Ma and D. M. Wu, A second order estimate for complex Hessian equations on a compact K\"ahler manifold, Math. Res. Lett. 17 (2010), 547-561.



\bibitem{JL} H. M. Jiao and J. X. Liu, On a class of Hessian type equations on Riemannian manifolds, Proc. Amer. Math. Soc. (2021). https://doi.org/10.1090/proc/15508.


\bibitem{Ko} S. Ko\l odziej, The complex Monge-Amp\`ere equation, Acta Math. 180 (1998), no. 1, 69-117.

\bibitem{KNguyen1} S. Ko\l odziej and N. C. Nguyen, Weak solutions of complex Hessian equations on compact Hermitian manifolds, Compos. Math. 152 (2016), no. 11, 2221–2248.

\bibitem{KNguyen2} S. Ko\l odziej and N. C. Nguyen, The Dirichlet problem for the Monge-Amp\`ere equation on Hermitian manifolds with boundary, arXiv:2112.10042v2.




\bibitem{S.Y.Li} S. Y. Li, On the Dirichlet problems for symmetric function equations of the eigenvalues of the complex Hessian, Asian J. Math. 8 (2004), 87-106.

\bibitem{Li} Y. Y. Li, Some existence results of fully nonlinear elliptic equations of Monge-Ampere type, Comm. Pure Appl. Math. 43 (1990), 233-271.

\bibitem{LuCH} C. H. Lu, Solutions to degenerate complex Hessian equations, J. Math. Pures Appl. (9) 100 (2013), no. 6, 785-805.


\bibitem{P-S-S} D. H. Phong, J. Song and J. Sturm, Complex Monge-Amp\`ere equations. Surveys in differential geometry. Vol. XVII, 327-410, Surv. Differ. Geom., 17, Int. Press, Boston, MA, 2012.

\bibitem{SW} J. Song and B. Weinkove, On the convergence and singularities of the J-flow with applications to the Mabuchi energy, Comm. Pure Appl. Math. 61 (2008), 210-229.

\bibitem{Sun1} W. Sun, On a class of fully nonlinear elliptic equations on closed Hermitian manifolds {II}: $L^\infty$ estimate, Comm. Pure Appl. Math. 70 (2017), 172-199.

\bibitem{Gabor} G. Sz\'ekelyhidi, Fully non-linear elliptic equations on compact Hermitian manifolds, J. Differential Geom. 109 (2018), 337-378.

\bibitem{S-T-W} G. Sz\'ekelyhidi, V. Tosatti and B. Weinkove, Gauduchon metrics with prescribed volume form, Acta Math. 219 (2017), 181-211.


\bibitem{TY1} G. Tian and S.-T. Yau, Complete K\"ahler manifolds with zero Ricci curvature. {I}, J. Amer. Math. Soc. 3 (1990), 579-609.

\bibitem{TY2} G. Tian and S.-T. Yau, Complete K\"ahler manifolds with zero Ricci curvature. {II}, Invent. Math. 106 (1991), 27-60.

\bibitem{TWWY} V. Tosatti, Y. Wang, B. Weinkove and X. K. Yang, $C^{2,\alpha}$ estimates for nonlinear elliptic equations in complex and almost complex geometry, Calc. Var. Partial Differential Equations 54 (2015), 431-453.


\bibitem{T-W2} V. Tosatti and B. Weinkove, The complex Monge-Amp\`ere equation on compact Hermitian manifolds, J. Amer. Math. Soc. 23 (2010), 1187-1195.

\bibitem{TW17} V. Tosatti and B. Weinkove, The Monge-Amp\`ere equation for $(n - 1)$-plurisubharmonic functions on a compact K\"ahler manifold, J. Amer. Math. Soc. 30 (2017), 311-346.


\bibitem{Yau} S.-T. Yau, On the Ricci curvature of a compact K\"ahler manifold and the complex Monge-Amp\`ere equation. {I}, Comm. Pure Appl. Math. 31 (1978), 339-411.

\bibitem{D.K.Zhang} D. K. Zhang, Hessian equations on closed Hermitian manifolds, Pacific J. Math. 291 (2017), 485-510.

\bibitem{X.W.Zhang} X. W. Zhang, A priori estimates for complex Monge-Amp\`ere equation on Hermitian manifolds, Int. Math. Res. Not. IMRN 2010, 3814-3836.



\end{thebibliography}
\end{document}